\documentclass[12pt]{article}

\usepackage{amssymb}

\usepackage{common}

\title{On $\lam$-existence  over a predicate}

\author{Alexander Usvyatsov\thanks{ 
The author thanks the Austrian Science Foundation (FWF), projects P33895 and  P33420, for supporting this research. 
}}
 
\newcommand{\Addresses}{{
  \bigskip
  \footnotesize

Alexander Usvyatsov, \textsc{Institut f\"{u}r Diskrete Mathematik und Geometrie,
TU Wien,
1040, Vienna, Austria}

}} 
 
\newtheorem{theorem}{Theorem}[section]

\newtheorem{definition}[theorem]{Definition}
\newtheorem{example}[theorem]{Example}
\newtheorem{lem}[theorem]{Lemma}
\newtheorem{obs}[theorem]{Observation}
\newtheorem{co}[theorem]{Corollary}

\newtheorem{hyp}[theorem]{Hypothesis}
\newtheorem{remark}[theorem]{Remark}

\newtheorem{conv}[theorem]{Convention}

\newtheorem{ft}[theorem]{Fact}
\renewenvironment{proof}{\noindent {\em Proof:}}{\hspace*{1cm}
        \hspace*{\fill}$\rule{1.2ex}{1.4ex}$\medskip} 
\newenvironment{re}{\begin{remark}\rm}{\end{remark}}
 
\newenvironment{de}{\begin{definition}\rm}{\end{definition}}
\newenvironment{fact}{\begin{ft}\rm}{\end{ft}}

\date{}

\begin{document}

%

\maketitle

\abstract{We prove that in a countable theory $T$ fully stable over a predicate $P$, any $\lam$-complete set $A$ has the $\lam$-existence property. This means that $A$ can be extended to a $\lam$-saturated model of $T$ without changing the $P$-part.  The notion of $\lam$-completeness, introduced in this paper, captures some obvious necessary conditions for such an extension to be possible (for example, the $P$-part of $A$ has to be a $\lam$-saturated model of the appropriate theory). 
So in a fully stable theory $T$, $\lam$-existence can only fail for trivial reasons. This generalizes results of Chatzidakis in the context of difference fields of characteristic 0.

\section{Introduction}

Let $T$ be a complete first order theory, $P$ a distinguished unary predicate. Let $T^P$ be the complete first order theory of $P$. More precisely, we will work in a monster model $\cC \models T$, and $T^P$ will be the theory of $\cC^P$. In addition, assume that $P$ is embedded and stably embedded; i.e., there is no additional structure definable on $P$, which is not definable already in $T^P$ with parameters from $P$.  

One can ask the following natural question: does every model of $T^P$ ``occur'' as the $P$-part of some model of $T$? We know that, in general, the answer to this question is ``no''. So the question becomes: under what conditions on $T$ is the above the case for all models of $T^P$? Alternatively, one can ask: under what conditions on the model of $T^P$ is the above the case for an arbitrary $T$? These and related questions are addressed in several  papers, e.g., \cite{Sh234,ShUs322a, ShUs322b, Us24}. For example, in a recent paper \cite{Us24}, the author has showed that if $T$ is fully stable over $P$ (see Definition \ref{dfn:fullystable}), then the answer is ``yes'' for any model of $T^P$. In fact, more is true: any complete set $A$ (Definition \ref{dfn:complete}) can be extended to a model of $T$ without changing its $P$-part (completeness is an obvious necessary condition on $A$ for this to be possible). A similar result has been recently proven in a preprint by Bayes, Kaplan, and Simon assuming a different notion of stability  over $P$ (in particular, it holds for any simple theory $T$). 

One motivating example for this study is the theory of difference fields. Afshordel \cite{MR3343525} has shown that any difference field $(K, \sigma)$ with a pseudo-finite fixed field $k = Fix(\sigma)(K)$, can be extended to a model $M$ of $ACFA$ without changing the fixed field, i.e., $k = Fix(\sigma)(M)$. The author's result mentioned above generalizes this for the case of characteristic $0$: it is shown in \cite{Us24} that $ACFA_0$ is indeed fully stable over $P = Fix(\sigma)$. The results of Bayes, Kaplan, and Simon, generalizes this for all characteristics, since $ACFA_p$ is simple for any value of $p$.  At the same time, the results in \cite{Us24} give more: specifically, if $T$ is fully stable, and $A$ is a complete set, the model $M \models T$ extending $A$ with $P^A = P^M$ can be chosen to be \emph{locally constructible} over $A$. In particular, in case of $ACFA_0$, this shows that every difference field $k$ of characteristic $0$ with a pseudo-finite fixed field can be extended to a model $K$ of $ACFA_0$ with $Fix(\sigma)(K) = Fix(\sigma)(k)$ such that $K$ is locally constructible over $k$. It would be interesting to ask whether the same is true for $ACFA_p$. 

\medskip

This paper addresses a different (but related) question. Suppose $N \models T^P$ is $\lam$-saturated. Is there a $\lam$-saturated model $M$ of $T$ with $P^M = N$? If the answer is ``yes'', we say that $T$ has the $\lam$-existence property over $N$. One can ask for more, e.g., $M$ is $\lam$-prime, or even $\lam$-primary over $N$. 

If e.g. $\lam = \lam^{<\lam}$ and $N$ is a saturated model of $T^P$ of cardinality $\lam$, then the answer to the above questions in ``yes'' \cite{ShUs322a}. If, however, $N$ is a $\lam$-saturated model of $T^P$ of a larger cardinality, things become more complicated. 

The original motivation comes, again, from the study of difference fields. In  \cite{Chatzidakis2020RemarksAT}, Chatzidakis has shown that $ACFA_0$ has $\lam$-existence (over the fixed field) for all uncountable $\lam$ (and even $\lam = \aleph_\eps$). Moreover, every $\lam$-saturated pseudo-finite field $k$ of characteristic $0$ can be extended to a $\lam$-saturated model $K$ of $ACFA_0$ with $Fix(\sigma)(K) = k$ such that $K$ is $\lam$-prime over $k$. 

\smallskip

Motivated by Chatizidakis' work,  the author has shown in \cite{usvyatsov2024stability} that, given a theory $T$ fully stable over $P$ and $\lam>|T|$, the class $\mathcal{K}_\lam(T,N)$ of $\lam$-saturated models of $T$ with a fixed $P$-part $N$, admits prime models. This, however, relies on the class $\mathcal{K}_\lam(T,N)$ being non-empty. It was left open whether, given a prescribed model of $T^P$ which is $\lam$-saturated, it always ``occurs" as the $P$-part of some $\lam$-saturated model of $T$. In the case of $ACFA_0$, for example, this is not hard, and is one of the first steps in the proof of the main theorem in \cite{Chatzidakis2020RemarksAT}. In the general context of model theory over $P$, it is often the existence problems that prove more difficult, and require more sophisticated techniques (in our case, stability theory).  

In this paper we address this question and prove that if $T$ is fully stable over $P$, then $T$ has $\lam$-existence over $P$. Moreover, every complete set $A$ whose $P$-part is a $\lam$-saturated model $N$ of $T^P$ can be extended to a $\lam$-saturated model $M$ of $T$ with $P^M = N$ such that $M$ is $\lam$-constructible over $N$. This generalizes the results in \cite{Chatzidakis2020RemarksAT}, since $ACFA_0$ is fully stable.

It is interesting to point out that, although we do not know whether a similar result holds for $ACFA_p$ for $p>0$, Chatzidakis has conjectured that the answer is ``no''. This would immediately imply that $ACFA_p$ is not fully stable, even though it has full existence. It may also turn out to be that case that $ACFA_p$ has full existence, but the models of $ACFA_p$ that are constructed in \cite{MR3343525} are not locally atomic over the underline difference fields. These would all be interesting questions to investigate.

\section{Preliminaries}

In this section we set up the framework and recall basic notions and facts. The reader is referred to  previous works (e.g., \cite{ShUs322a, Us24}) for a more detailed discussion.

\subsection{Conventions and hypotheses}

\begin{conv}
Let $T$ be a complete first order theory, $P$ a monadic predicate in
its vocabulary. For simplicity, we will assume that the language of $T$ does not contain function symbols (so 
that every subset of a model, containing all the constant symbols, is a substructure; in fact, we will treat all subsets as substructures). In addition, we assume that $T$ implies that $P$ is infinite. 
\end{conv}

\noindent
Let $\mathcal{C} $ be the monster model of $T$. From now on, we assume that all models of $T$ are elementary submodels of $\mathcal{C}$, and all sets are subsets of $\mathcal{C}$. \medskip
 
For $M \models T$, we denote by  $M|_P$ the set $P^M$ viewed as a substructure of $M$. Similarly, for a subset $A \subseteq M$, we denote by $M|_A$ the substructure of $M$ with universe $A$. We write $A\equiv B$ if $Th({\cal C}|_A)=Th({\cal
C}|_B)$.

We also denote $\cC^P = \cC|_P$ and $T^P = Th(\cC|_P)$. For a set $A$, we denote $P^A = A \cap P^\cC$.

When no confusion should arise, we will write $P$ for $P^\cC$. Also, for a set $A$, we will often denote by $A$ both the set and the substructure of $\cC$ with universe $A$. So for example, when we write that $A\cap P^\cC$ is $\lam$-saturated, or just that $A\cap P$ is $\lam$-saturated, we mean that the substructure $\cC|_{A\cap P^\cC}$ is a $\lam$-saturated model of the appropriate theory (if $A \cap P \prec P$, which will be the case in this paper, then the appropriate theory is $T^P$).


%

\bigskip

Throughout the paper, we are going to make the following fundamental assumptions on $T$:

\begin{hyp}\label{asm:1}

  
\begin{enumerate}
 \item Every type over $P^\mathcal{C}$ is definable. In other words, $P$ is \emph{stably embedded}: subsets of $P^\mathcal{C}$  that are definable  with parameters (in $\mathcal{C}$), are already definable (in $\mathcal{C}$) with parameters in $P^\mathcal{C}$.
\item  In addition, subsets of $P^\mathcal{C}$  that are 0-definable (in $\mathcal{C}$),  are already 0-definable in
$\cC^P$ (modulo $T^P$).
\item Moreover, $T$ has quantifier elimination (even down to the level of predicates). 
 \end{enumerate}
\end{hyp}

%
%

See \cite{ShUs322a} for clarifications and ``justifications'' for making these assumptions.

\smallskip

Note that $\cC^P$ can be seen as the monster model for the theory $T^P$, and that it follows from Hypothesis 1(iii) that $T^P$ also has QE.

\subsection{The existence property, completeness, and stability}

The main concept under discussion in this paper is the $\lam$-existence property (see also Definition \ref{de:fullexist} below). We recall some basic concepts from \cite{Us24}, \cite{ShUs322a}, and \cite{ShUs322b}:

\begin{de}\label{de:existence}
\begin{enumerate}
\item 
	We say that a set $A$ has the \emph{existence property over $P$}, or simply the \emph{existence property} if there exists $M \models T$ such that $A \subseteq M$ and $P^M = P^A$. 
\item
	Let $\lam$ be a cardinal. If, in addition, in clause (i) there is $M$ which is $\lam$-saturated, we say that $A$ has the \emph{$\lam$-existence property over $P$}, or simply the \emph{$\lam$-existence property}. 
\end{enumerate}
\end{de}

Given a set $A$, in order for their to be a chance for $M$ as above to exist, $P^A$ should be ``suitable'' for being the $P$-part of a model of $T$. For example, $P^A$ should obviously be itself a model of $T^P$. In fact, this is not enough. $P^A$ should also be closed under the parameters necessary to define types of tuples of $A$ over $P$. These conditions are summarized in the following definition (see also Fact \ref{obs:complete_characterization}).


\begin{de}\label{6}\label{dfn:complete}
$A\subseteq {\cal C}$ is {\it complete} if for every
formula $\psi(\bar x,\bar y)$ and $\bar b\subseteq A, \models
(\exists\bar x\in P)\psi(\bar x,\bar b)$ implies $(\exists
\bar a\subseteq P\cap A)\models \psi(\bar a,\bar b)$. 
\end{de}

The following is clear:

\begin{obs}\label{obs:complete}
If $M\prec {\cal C}$ and $P^M\subseteq A\subseteq M$, then
$A$ is complete.
\end{obs}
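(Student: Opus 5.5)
We check Definition \ref{dfn:complete} directly, with $P \cap A = P^M$ because $P^M \subseteq A \subseteq M$. Fix a formula $\psi(\bar x,\bar y)$ and a tuple $\bar b \subseteq A$, and assume $\models (\exists \bar x \in P)\psi(\bar x,\bar b)$. We must find $\bar a \subseteq P \cap A$ with $\models \psi(\bar a,\bar b)$.

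The plan is to push the existential statement down into $M$ and take a witness there. First, write the formula $(\exists \bar x\in P)\psi(\bar x,\bar y)$ as the first order formula $\exists \bar x\,\big(\bigwedge_i P(x_i) \wedge \psi(\bar x,\bar y)\big)$; this is possible because $P$ is in the vocabulary. Since $\bar b \subseteq A \subseteq M$ and $M \prec \cC$, elementarity gives
\[
M \models \exists \bar x\,\Big(\bigwedge_i P(x_i) \wedge \psi(\bar x,\bar b)\Big).
\]
So there is $\bar a \subseteq M$ with $M \models \bigwedge_i P(a_i) \wedge \psi(\bar a,\bar b)$.

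Next, apply elementarity again, now to this quantifier-free-in-$\bar a$ statement, to get $\models \psi(\bar a,\bar b)$ in $\cC$ and $\bar a \subseteq P^\cC \cap M = P^M$. By hypothesis $P^M \subseteq A$, so $\bar a \subseteq P \cap A$, which is exactly the required witness.

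There is no genuine obstacle here. The only thing to be careful about is that "$\exists \bar x\in P$" really is an ordinary first-order formula, so that elementarity of $M$ in $\cC$ applies to it. The containment $P^M\subseteq A$ is precisely what moves the witness from $M$ into $A$.
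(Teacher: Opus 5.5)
Your proof is correct and is the standard elementarity argument: the witness in $P$ can be taken in $M$, and there it lies in $P^M\subseteq A$. The paper gives no written proof (it just says ``the following is clear''), and this is the argument it leaves to the reader. One wording point: $\psi(\bar a,\bar b)$ need not be quantifier-free, but elementarity transfers arbitrary formulas, so nothing changes.
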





\medskip

Clearly, in order for $A$ to have the $\lam$-existence property, $P^A$ also has to be $\lam$-saturated. 

\medskip

The following useful easy characterization offers another understanding of the notion of completeness (Observation 4.2 in \cite{ShUs322a}):

\begin{fact}
\label{obs:complete_characterization}
A set $A$ is complete if and only if for every $\bar a\subseteq A$ and
$\psi(\bar x,\bar y)$  the $\psi$-type $tp_\psi (\bar a/P^{\cal C})$ is definable
over $A\cap P^{\cal C}$ and $A\cap P^{\cal C}\prec P^{\cal C}$.
\end{fact}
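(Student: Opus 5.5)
We prove the two directions separately, writing $A_P = A\cap P^{\cal C}$ throughout. The only tools needed are the Tarski--Vaught test inside $P^{\cal C}$ (using that $T^P$ has QE and that $P$ is stably embedded) and the definability of types over $P$ given by Hypothesis \ref{asm:1}(i).

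\emph{Completeness implies the two conditions.} To see that $A_P\prec P^{\cal C}$, we apply the Tarski--Vaught criterion in $\cC^P$. Let $\varphi(x,\bar y)$ be a formula of $T^P$ and let $\bar b\subseteq A_P$ with $\cC^P\models\exists x\,\varphi(x,\bar b)$. By Hypothesis \ref{asm:1}(ii), $\varphi$ is, modulo $T$, equivalent to a formula of $T$ with quantifiers relativized to $P$. Hence $\models(\exists x\in P)\varphi(x,\bar b)$, and completeness supplies a witness in $A_P$.

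Next we show definability over $A_P$. Fix $\bar a\subseteq A$ and $\psi(\bar x,\bar y)$. By Hypothesis \ref{asm:1}(i), $tp_\psi(\bar a/P)$ is definable by some $\theta(\bar y,\bar c)$ with $\bar c\subseteq P$. The formula
\[
\chi(\bar z,\bar a)\;:=\;\forall \bar y\in P\,\bigl(\psi(\bar a,\bar y)\leftrightarrow\theta(\bar y,\bar z)\bigr)
\]
has parameters only from $A$, and $\models(\exists\bar z\in P)\chi(\bar z,\bar a)$, witnessed by $\bar c$. Completeness therefore gives $\bar c'\subseteq A_P$ with $\models\chi(\bar c',\bar a)$. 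So $\theta(\bar y,\bar c')$ defines $tp_\psi(\bar a/P)$ over $A_P$.

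\emph{The two conditions imply completeness.} Assume $\models(\exists\bar x\in P)\psi(\bar x,\bar b)$ with $\bar b\subseteq A$. The set $\{\bar x\in P:\psi(\bar x,\bar b)\}$ is part of the $\psi^{opp}$-type of $\bar b$ over $P$. By hypothesis it is therefore defined by $\theta(\bar x,\bar d)$ for some $\bar d\subseteq A_P$, where $\theta$ may be taken to be a $T^P$-formula, using Hypothesis \ref{asm:1}(ii) and QE. Then $\cC^P\models\exists\bar x\,\theta(\bar x,\bar d)$, and since $A_P\prec P^{\cal C}$ there is $\bar a\subseteq A_P$ with $\theta(\bar a,\bar d)$. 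Since $\theta(\cdot,\bar d)$ defines exactly the solutions of $\psi(\cdot,\bar b)$ in $P$, we get $\models\psi(\bar a,\bar b)$.

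The main subtlety is bookkeeping about languages: the definition $\theta$ must be expressible in $T^P$ so that elementarity of $A_P$ in $\cC^P$ applies. This is exactly what Hypothesis \ref{asm:1}(ii)--(iii) guarantee. We also use that definability of $\psi$-types for tuples extends to multi-variable $\bar x$ ranging over $P$, which is immediate by viewing $\bar x$ as a tuple of parameter variables.
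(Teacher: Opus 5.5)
Your proof is correct in both directions and is the natural direct verification; the paper gives no argument of its own here, citing Observation 4.2 of \cite{ShUs322a}. One minor point of attribution: the Tarski--Vaught step does not need Hypothesis \ref{asm:1}(ii), because a $T^P$-formula holds in $P^{\cal C}$ exactly when its relativization to $P$ holds in ${\cal C}$. In the converse direction, by contrast, Hypothesis \ref{asm:1}(ii) (or QE) is genuinely needed to turn the definition $\theta(\bar x,\bar d)$ into a $T^P$-formula, so that $A\cap P^{\cal C}\prec P^{\cal C}$ can be applied.
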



%
%

\smallskip

Now that we have defined complete sets, we can recall the notion of full existence:

\begin{de}\label{de:fullexist}
 \begin{enumerate}
 \item We say that a theory $T$ has the \emph{full existence property}, or just has \emph{full existence}, if every complete set $A$ has the existence property.  
\item We say that a theory $T$ has the \emph{full $\lam$-existence property}, or just has \emph{full $\lam$-existence}, if every complete set $A$ with a $\lam$-saturated $P$-part has the $\lam$-existence property. 
\item Given $N \models T^P$, we say that $T$ has  \emph{full existence over $N$} if every complete set $A$ with $P^A = N$ has the existence property. 
\item Given $N \models T^P$, we say that $T$ has  \emph{full $\lam$-existence over $N$} if every complete set $A$ with $P^A = N$ has the $\lam$-existence property. 
\end{enumerate}

\end{de}

Let us recall one known general example of the $\lam$-existence property:

\begin{fact}(\cite[4.13]{ShUs322a})\label{fact:satexist}
Let $N$ be a saturated model of $T^P$ of cardinality $\lam = \lam^{<\lam}$. Then every complete $A$ with $|A| = \lam$ and $P^A = N$ has the $\lam$-existence property.
\end{fact}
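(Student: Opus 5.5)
We will build $M$ by a chain construction of length $\lam$, adding realizations of types one at a time while keeping two invariants: $|A_i| = \lam$ and $A_i$ complete with $P^{A_i} = N$. Each step uses the saturation of $N$ in cardinality $\lam$ together with $\lam^{<\lam}=\lam$. Fix $A$ complete with $|A|=\lam$ and $P^A=N$, where $N$ is saturated of cardinality $\lam$.

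The key step is a one-step extension lemma. If $B$ is complete with $P^B=N$, $|B|\le\lam$, and $p\in S(C)$ with $C\subseteq B$, $|C|<\lam$, then we want some $c$ realizing $p$ such that $B\cup\{c\}$ is complete with $P^{B\cup\{c\}}=N$. To find it, let $N^*\succ N$ be saturated of larger cardinality inside $P^{\cC}$, realize $p$ by some $c$, and consider $tp(c/B)$ together with the definitions, from Fact \ref{obs:complete_characterization}, of the $\psi$-types of finite tuples of $B$ over $P$.

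We want $c$ with $c\notin P$ (unless $p$ is realized in $N$), and with every tuple from $B c$ having its $\psi$-types over $P$ definable over $N$. The plan is to choose $c$ so that the type $q$ of $c$ over $B\cup P$ is ``finitely satisfiable / definable over $N$''. To get this, we use a compactness argument in the style of \cite[4.13]{ShUs322a}. The condition that $B\cup\{c\}$ is complete is the requirement that for each formula $\exists\bar x\in P\,\psi(\bar x,c,\bar b)$ that holds, a witness lies in $N$. We build $tp(c/B)$ by a construction of length $\lam$, enumerating all pairs $(\psi,\bar b)$ (there are $\lam$ many). At each stage the type has $<\lam$ parameters from $B$ and a small part of $N$. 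We then decide either $\neg\exists\bar x\in P\,\psi$ or add $\psi(\bar a,x,\bar b)$ for some $\bar a\in N$. The saturation of $N$, together with the completeness of $B$ applied to the formula obtained by existentially quantifying $x$ (which is allowed since the partial type is over $B$ and small), guarantees that a witness $\bar a\in N$ exists whenever the existential is consistent. Using $\lam^{<\lam}=\lam$, there are only $\lam$ many small subsets to handle. The types we build also include ``$x\ne a$'' for $a\in N$ and $x\notin P$ to keep $P$ unchanged, when $p$ is not algebraic over $P$. If $p$ is realized in $P$, then completeness of $B$ already yields a realization in $N$.

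With the lemma in hand, the global construction is routine. We build a continuous increasing chain $(A_i : i<\lam)$ with $A_0=A$, each $A_i$ complete with $P^{A_i}=N$ and $|A_i|=\lam$; unions of chains preserve completeness trivially. A bookkeeping enumeration of all pairs (small $C\subseteq A_i$, $p\in S(C)$) uses $\lam^{<\lam}=\lam$, so that every type over a small subset of $M=\bigcup A_i$ appears at some stage. Then $M$ satisfies Tarski–Vaught because every consistent formula over $M$ is realized, hence $M\prec\cC$. It is $\lam$-saturated by the bookkeeping, using the regularity of $\lam$ (which follows from $\lam^{<\lam}=\lam$) so that small subsets appear at bounded stages. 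Finally $P^M=N$.

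The main obstacle is the one-step lemma, specifically ensuring consistency at each stage of the inner construction. When we add a witness $\bar a\in N$ for $\exists\bar x\in P\,\psi(\bar x,x,\bar b)$, we need that the accumulated partial type of size $<\lam$, with parameters in $B$, still implies the existential over $N$. I would handle this by reducing, via compactness, to a single formula $\theta(x,\bar b')$. We then apply completeness of $B$ to $\exists x\,[\theta\wedge\exists\bar x\in P\,\psi]$, and use that the conjunction over $P$-parameters is definable over $N$ via stable embeddedness (Hypothesis \ref{asm:1}) and the saturation of $N$, to pick $\bar a$ realizing the resulting type over $N$ of size $<\lam$.
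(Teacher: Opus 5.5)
\textbf{Verdict: correct, and different from anything in the paper (which gives no proof here).} The paper only quotes this statement from \cite[4.13]{ShUs322a}, so there is no internal proof to match; what you give is a standard direct proof. In this paper's language it reads as follows.

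\emph{Your route.} $B$ is complete with $\lam$-saturated $P$-part, i.e.\ $\lam$-complete. Enumerate the formulas over $B$ as $\langle\psi_\alpha(x,\bar b_\alpha,\bar z):\alpha<\lam\rangle$. At each stage, Theorem~\ref{co:complete_char}(ii), applied to the current partial type of size $<\lam$, lets you add $[(\exists\bar z\subseteq P)\psi_\alpha(x,\bar b_\alpha,\bar z)]\rightarrow\psi_\alpha(x,\bar b_\alpha,\bar d_\alpha)$ with $\bar d_\alpha\in N$. Every realization $c$ of the union then satisfies the criterion of Observation~\ref{8.5}, so $tp(c/B)\in S_*(B)$. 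The chain of length $\lam$ with bookkeeping finishes the proof.

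\emph{The paper's route to its generalization} (Lemma~\ref{lem:li}, the lemma preceding Theorem~\ref{th:primary}, and Theorem~\ref{th:primary}). There, stability, through truly minimal types, produces $\lam$-isolated types, and Lemma~\ref{lem:isolated_star} shows that a $\lam$-isolated type over a $\lam$-complete set is automatically in $S_*$. So weak orthogonality to $P$ is forced by fewer than $\lam$ formulas, and no bound on the size of the base is needed. As a result $N$ can be any $\lam$-saturated model, and $\lam$-constructibility comes for free.

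\emph{What each buys.} Your route uses no stability at all. The price is that deciding every formula over $B$ while keeping the partial type of size $<\lam$ is possible only when $|B|\le\lam$ (and $\lam>|T|$, presumably assumed in the source). Keeping all the $A_i$ of size $\lam$ is exactly where $|N|=\lam=\lam^{<\lam}$ is used, and the regularity of $\lam$ gives saturation of the union.

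\emph{Minor repairs.}
\begin{enumerate}
\item In the key step, the compactness reduction must be made for all finite conjunctions $\theta$ of the current type simultaneously, not for a single $\theta$. Consider the type $\{P(\bar z)\}\cup\{\exists x\,(\theta(x)\wedge\psi(x,\bar b,\bar z))\}$, when $\exists\bar z\in P\,\psi$ is consistent with the current type. It has size $<\lam$ and is realized in $P^{\cC}$. By Fact~\ref{fct:typeoverP} it is equivalent to a type over $N$, hence it is realized in $N$. A witness chosen for a single $\theta$ would not do; your last sentence suggests this is what you intend.
\item Do not add ``$x\neq a$'' for every $a\in N$: that is $\lam$ formulas and destroys the $<\lam$ invariant. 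It is unnecessary anyway. The formula $x=z$ occurs among the enumerated $\psi$, so the construction itself yields either $x=a$ for some $a\in N$ or $\neg P(x)$.
\item For the same reason, the case ``$p$ realized in $P$'' needs no separate treatment. Where you do treat it, it is $\lam$-saturation via Fact~\ref{fct:typeoverP}, not completeness alone, that gives a realization in $N$.
\item The auxiliary $N^*$ is never used and can be dropped.
\end{enumerate}
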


\subsection{Relevant types and stability over $P$}

Let us now recall the relevant notion of a type for this context. Note that it is only defined over a complete set.

\begin{de}\label{6.5}\label{dfn:startypes}

Let $A$ be a complete set. 

\begin{itemize}

\item[(i)] Let
$$S_*(A)=\{tp(\bar c/A):P\cap (A\cup \bar c)=P\cap A {\rm\ and}\ A\cup \bar
c\ {\rm is\ complete}\}$$
\item[(ii)] $A$ is \emph{stable over $P$}, or simply \emph{stable}, if for all $A^\prime$
with $A^\prime\equiv A$, we have $|S_*(A^\prime)|\leq |A^\prime|^{|T|}$.
\end{itemize}
\end{de}

%
%

We often refer to types in  $S_*(A)$ as \emph{complete types over $A$ which are weakly orthogonal to $P$}.

\subsection{Some basic properties}

\begin{obs}\label{8.5}
 Let $A$ be a set and $\c$ a tuple. Then $A$ is complete and $tp(\c/A) \in S_*(A)$ if and only if for every formula $\psi(\x,\b,\z)$ over $A$ we have 
 \[ \models \exists \z \in P\, \psi(\c,\b,\z) \Longrightarrow \exists \bar d \in P\cap  A \, \text{such that} \, \models \psi(\bar c,\b,\d) \]
 \end{obs}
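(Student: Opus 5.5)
This is a direct two-way unfolding of Definition \ref{dfn:complete} (completeness) and Definition \ref{dfn:startypes}(i) (membership in $S_*(A)$), applied to the set $A\cup\bar c$. The key observation is that "$A$ complete and $tp(\bar c/A)\in S_*(A)$" amounts to "$A\cup\bar c$ is complete and $P\cap(A\cup\bar c)=P\cap A$". The displayed condition is exactly completeness of $A\cup \bar c$ tested only on tuples $\bar c\bar b$ with witnesses taken from $P\cap A$. Throughout, I read $\bar b$ as ranging over tuples from $A$ and $\psi$ as an arbitrary formula without parameters.

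For the forward direction, assume $A$ is complete and $tp(\bar c/A)\in S_*(A)$, so $A\cup\bar c$ is complete and $P\cap(A\cup\bar c)=P\cap A$. Suppose $\models\exists\bar z\in P\,\psi(\bar c,\bar b,\bar z)$. Apply Definition \ref{dfn:complete} to the set $A\cup\bar c$ with the parameter tuple $\bar c\bar b\subseteq A\cup\bar c$. This gives $\bar d\subseteq P\cap(A\cup\bar c)=P\cap A$ with $\models\psi(\bar c,\bar b,\bar d)$, as required.

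For the converse, assume the displayed condition. I will check three things.

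\begin{enumerate}
\item \emph{$P\cap(A\cup\bar c)=P\cap A$.} Let $c_i\in\bar c$ satisfy $P$. Take $\psi(\bar x,\bar z)$ to be $z=x_i$. Then $\exists z\in P\,\psi(\bar c,z)$ holds, so some $d\in P\cap A$ equals $c_i$. Hence $c_i\in A$.
\item \emph{$A\cup\bar c$ is complete.} Any finite tuple from $A\cup\bar c$ is a subtuple of some $\bar c\bar b$ with $\bar b\subseteq A$. After adding dummy variables, a formula over that tuple becomes one of the form $\psi(\bar c,\bar b,\bar z)$. The hypothesis then gives a witness in $P\cap A=P\cap(A\cup\bar c)$.
\item \emph{$A$ is complete.} This is the special case where $\psi$ does not mention $\bar x$.
\end{enumerate}

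Together these give that $A$ is complete and $tp(\bar c/A)\in S_*(A)$.

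There is no real obstacle here. The only points needing care are the bookkeeping reduction of arbitrary tuples from $A\cup\bar c$ to the shape $\bar c\bar b$ via dummy variables, and the trick with $z=x_i$ that shows $\bar c$ adds no new $P$-points.
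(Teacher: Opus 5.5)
Your proof is correct, and it is the direct unfolding of Definitions \ref{dfn:complete} and \ref{dfn:startypes} that the paper leaves implicit, since it states this observation without proof; the $z=x_i$ trick and the dummy-variable reduction are the right bookkeeping. One point is worth half a line: $tp(\bar c/A)\in S_*(A)$ a priori only says some realization $\bar c'$ of the type satisfies the defining conditions, but those conditions and your displayed condition depend only on $tp(\bar c/A)$, so you may work with $\bar c$ itself.
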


\medskip

Compactness implies that completeness can be characterized using a stronger uniform definability condition (see section 4 of \cite{Us24}):

\begin{fact}\label{obs:uniformdef}
In Hypothesis \ref{asm:1} (i), the definitions of $\psi$-types over $P$ can be assumed to be \emph{uniform}. Specifically:

 There are $\langle \Psi_\psi(\y,\z) :\psi(\bar
x,\bar y)\in L(T)\rangle $ such that for all
$\bar a\subseteq \cC$, $tp_\psi (\bar a/P^\cC)$ is definable by
$\Psi _\psi (\bar y,\bar c)$ for some $\bar c\subseteq P^\cC$. 

In other words, for every $\psi(\x,\y)$ and $\a \in \fC$ there exists $\c \in P^\cC$ such that $\cC \models \forall \y \psi(\a,\y) \longleftrightarrow \Psi(\y,\c)$.

\end{fact}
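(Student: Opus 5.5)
Fact~\ref{obs:uniformdef} is a standard compactness argument, so I plan to argue by contradiction. Fix $\psi(\bar x,\bar y)$. By Hypothesis~\ref{asm:1}(i), for every $\bar a\subseteq\cC$ the type $tp_\psi(\bar a/P^\cC)$ is definable by \emph{some} formula $\theta(\bar y,\bar c)$ with $\bar c\subseteq P^\cC$, and $\theta$ may depend on $\bar a$. The task is to show that finitely many such $\theta$ suffice. Once that is done, they can be amalgamated into a single $\Psi_\psi$.

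\textbf{Step 1 (the partial type).} Consider the partial type in $\bar x$
\[
\Sigma(\bar x)=\Big\{\neg\,\exists \bar z\in P\ \forall \bar y\in P\ \big(\psi(\bar x,\bar y)\leftrightarrow \theta(\bar y,\bar z)\big)\ :\ \theta(\bar y,\bar z)\in L(T)\Big\}.
\]
Only $\bar y\in P$ matters, since $tp_\psi(\bar a/P)$ concerns instances with parameters from $P$. If $\Sigma$ were consistent, it would be realized by some $\bar a$ in the monster model $\cC$. The type $tp_\psi(\bar a/P^\cC)$ would then not be definable over $P^\cC$, contradicting Hypothesis~\ref{asm:1}(i). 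Hence $\Sigma$ is inconsistent.

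\textbf{Step 2 (compactness).} By compactness there are finitely many $\theta_1(\bar y,\bar z_1),\dots,\theta_n(\bar y,\bar z_n)$ such that every $\bar a$ satisfies $\exists \bar z\in P\,\forall\bar y\in P\,(\psi(\bar a,\bar y)\leftrightarrow\theta_i(\bar y,\bar z))$ for some $i$.

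\textbf{Step 3 (coding).} This is the only step needing care, and it is a routine coding trick. We combine the $\theta_i$ into a single formula $\Psi_\psi(\bar y,\bar z)$, using extra parameter variables $\bar z=\bar z_1\dots\bar z_n u_1 u_2$ from $P$. Since $T$ implies $P$ is infinite, we may choose distinct $u_1\neq u_2$ in $P$. To select the $i$-th disjunct, use equality patterns among additional parameters, for instance variables $w_1,\dots,w_n$, declaring case $i$ to hold when $w_i=u_1$ and $w_j=u_2$ for $j\neq i$. Thus
\[
\Psi_\psi(\bar y,\bar z\,\bar w\,u_1u_2)=\bigvee_{i}\Big(w_i=u_1\wedge\bigwedge_{j\ne i}w_j=u_2\wedge u_1\neq u_2\wedge\theta_i(\bar y,\bar z_i)\Big).
\]
Given $\bar a$, pick $i$ and $\bar c_i\in P$ as in Step 2, let the remaining $\bar z_j$ be arbitrary elements of $P$, and choose the $w$'s accordingly. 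Then $\Psi_\psi(\bar y,\bar c)$ defines $tp_\psi(\bar a/P^\cC)$, with all parameters in $P^\cC$. This gives the uniform family $\langle\Psi_\psi\rangle$, and the final reformulation in the statement is just this conclusion restated.
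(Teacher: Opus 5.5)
Your argument is correct and is the compactness argument the paper has in mind; the paper gives no proof of its own beyond ``compactness implies'' and a pointer to section 4 of \cite{Us24}. The steps are: $\Sigma$ is inconsistent by stable embeddedness, compactness yields finitely many candidate definitions, and the equality-pattern coding (using $|P^{\cC}|\ge 2$) merges them into a single $\Psi_\psi$ with parameters in $P^{\cC}$. You also correctly read the quantifier $\forall \bar y$ in the restated form as ranging over $P$. That is the only reading under which the statement holds, and it is how the paper uses $\Psi_\psi$ later.
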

%
%
%
%

\begin{co} \label{co:defcomplete0}For any complete $A$ and
for all
$\bar a\subseteq A$, $tp_\psi (\bar a/P\cap A)$ is definable by
$\Psi _\psi (\bar y,\bar c)$ for some $\bar c\subseteq A\cap P$
(where $\Psi _\psi (\bar y,\bar z)$ is as in Observation \ref{obs:uniformdef}).
\end{co}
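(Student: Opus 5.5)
We combine the uniform definability of Fact \ref{obs:uniformdef} with the definition of completeness. Fix a complete set $A$, a tuple $\bar a\subseteq A$ and a formula $\psi(\bar x,\bar y)$, and let $\Psi_\psi(\bar y,\bar z)$ be the uniform defining scheme given by Fact \ref{obs:uniformdef}. That fact tells us there is some $\bar c\subseteq P^\cC$ (not yet known to lie in $A$) such that
\[
\cC\models \forall \bar y\in P\,\bigl(\psi(\bar a,\bar y)\leftrightarrow \Psi_\psi(\bar y,\bar c)\bigr).
\]
(Here we relativize $\bar y$ to $P$, which is what definability of $tp_\psi(\bar a/P^\cC)$ means.)

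Next we pull the parameter into $A$. Consider the formula
\[
\chi(\bar z,\bar a)\;:=\;\forall \bar y\in P\,\bigl(\psi(\bar a,\bar y)\leftrightarrow \Psi_\psi(\bar y,\bar z)\bigr).
\]
This is a first order formula with parameters $\bar a\subseteq A$, and the previous step gives $\models(\exists \bar z\in P)\,\chi(\bar z,\bar a)$. By the definition of completeness (Definition \ref{dfn:complete}), applied with $\bar b=\bar a$, there is $\bar c'\subseteq P\cap A$ with $\models\chi(\bar c',\bar a)$. So $\Psi_\psi(\bar y,\bar c')$ defines $tp_\psi(\bar a/P^\cC)$.

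Finally we restrict to $P\cap A$. Since $P\cap A\subseteq P^\cC$, for every $\bar d\subseteq P\cap A$ we have $\models\psi(\bar a,\bar d)$ if and only if $\models\Psi_\psi(\bar d,\bar c')$. Hence $\Psi_\psi(\bar y,\bar c')$ defines $tp_\psi(\bar a/P\cap A)$, with parameters $\bar c'\subseteq A\cap P$, as required.

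The only step needing care is making sure the uniform scheme of Fact \ref{obs:uniformdef} is expressible as a single formula $\chi$, so that completeness applies. This holds precisely because the defining formula $\Psi_\psi$ does not depend on $\bar a$; without uniformity there would be no single formula to which completeness could be applied. Everything else is immediate.
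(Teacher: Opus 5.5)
Your proof is correct and is essentially the argument the paper leaves implicit (it writes out no proof). Since $\Psi_\psi$ is parameter-free, $(\forall \bar y\in P)\bigl[\psi(\bar a,\bar y)\leftrightarrow\Psi_\psi(\bar y,\bar z)\bigr]$ is a single formula over $\bar a\subseteq A$ that is satisfiable in $P$ by Fact \ref{obs:uniformdef}, so completeness of $A$ gives a solution $\bar c'\subseteq A\cap P$, and restricting to $P\cap A$ finishes; as you note, this even shows $\Psi_\psi(\bar y,\bar c')$ defines $tp_\psi(\bar a/P^{\mathcal{C}})$, which is the form used later (Definition \ref{R}(iv), Theorem \ref{co:complete_char}).
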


\bigskip

As an easy (but important)  consequence of Hypothesis \ref{asm:1}, we obtain that types over complete sets of elements ``in $P$'' are, 
in fact, types over $P^A$. Specifically (see Corollary 4.7 in \cite{ShUs322a}):

\begin{fact}\label{fct:typeoverP}
	Let $A$ be a complete set, $p(x)$ a (partial) type over $A$ with $P(x) \subseteq p$. Then $p$ is equivalent 
	to a $T^P$-type $p'$ over $P^A$ with $|p'|=|p|$. 
	
	In particular, if $p$ is finite, then it realized in $P^A$. Similarly, if $|p|<\lam$ and $P^A$ is $\lam$-compact.
\end{fact}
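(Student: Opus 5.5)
The plan is to translate each formula of $p$ into a $T^P$-formula over $P^A$ using definability of types over $P$. Then the realization statements follow from completeness and from $P^A \prec P^\cC$. Throughout, $x$ may be a tuple of variables, each required to lie in $P$.

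First, fix a formula $\varphi(x,\bar a)\in p$ with $\bar a\subseteq A$. By Fact \ref{obs:complete_characterization}, since $A$ is complete, the $\varphi$-type $tp_\varphi(\bar a/P^\cC)$ is definable over $A\cap P^\cC$. Here $\varphi$ is viewed as $\varphi(\bar y;x)$ with $\bar a$ in the role of the object variables. Concretely, by Fact \ref{obs:uniformdef} there is $\bar c\subseteq P^A$ such that
\[
\cC\models \forall x\,\bigl(P(x)\rightarrow(\varphi(x,\bar a)\leftrightarrow \Psi_\varphi(x,\bar c))\bigr).
\]
The set defined by $P(x)\wedge\Psi_\varphi(x,\bar z)$ is $0$-definable in $\cC$ as a subset of $P^{|x|+|\bar z|}$. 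By Hypothesis \ref{asm:1}(ii) it is therefore defined on $P$ by some $L(T^P)$-formula $\Psi'_\varphi(x,\bar z)$. Hence $P(x)\wedge\varphi(x,\bar a)$ is equivalent in $\cC$ to $P(x)\wedge\Psi'_\varphi(x,\bar c)$.

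Second, let $p'$ consist of the formulas $\Psi'_\varphi(x,\bar c_\varphi)$, one for each $\varphi\in p$. Then $p'$ is a $T^P$-type over $P^A$ and $|p'|\le|p|$. If one insists on equality of cardinalities, duplicates can be padded or the statement read as $\le$; this bookkeeping is trivial. Any element of $P^\cC$ realizes $p$ if and only if it realizes $p'$, so $p$ and $p'$ are equivalent.

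Third, the realization claims. If $p$ is finite, close it under conjunction to get a single formula $\psi(x,\bar b)$ with $\bar b\subseteq A$. Since $p$ is consistent and contains $P(x)$, we have $\models(\exists x\in P)\,\psi(x,\bar b)$. Completeness of $A$ (Definition \ref{dfn:complete}) then yields a realization directly in $P\cap A$. If instead $|p|<\lam$ and $P^A$ is $\lam$-compact, use $p'$. It is consistent in $\cC^P$ because $p$ is realized in $P^\cC$. Since $P^A\prec P^\cC$ (Fact \ref{obs:complete_characterization}), every finite part of $p'$ is realized in $P^A$. So $p'$ is a finitely satisfiable type over $P^A$ of size $<\lam$, and $\lam$-compactness gives a realization in $P^A$. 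That element realizes $p$ by the equivalence established in the second step.

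The main point needing care is the first step. Stable embeddedness only yields parameters somewhere in $P^\cC$, so the argument needs completeness through Fact \ref{obs:complete_characterization} to place the parameters inside $P^A$. It also needs Hypothesis \ref{asm:1}(ii) to ensure the defining formula can be taken in the language of $T^P$. The rest is routine compactness.
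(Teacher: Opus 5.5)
Your proof is correct. The paper gives no argument for this Fact; it is quoted from Corollary 4.7 of \cite{ShUs322a}. Your route is the standard argument behind that citation: rewrite each $\varphi(x,\bar a)\in p$ on $P$ as an $L(T^P)$-formula with parameters in $P^A$, using definability of $\varphi$-types over $P$ together with Hypothesis~\ref{asm:1}(ii). Then use Definition~\ref{dfn:complete} when $p$ is finite, and $P^A\prec P^{\mathcal{C}}$ together with $\lambda$-compactness when $|p|<\lambda$.

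There are two small precision points.

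First, Fact~\ref{obs:uniformdef} by itself only gives $\bar c\subseteq P^{\mathcal{C}}$. To get $\bar c\subseteq P^A$, apply completeness to the true statement $(\exists \bar z\subseteq P)(\forall x\in P)\,[\varphi(x,\bar a)\leftrightarrow\Psi_\varphi(x,\bar z)]$. Alternatively, just use the (not necessarily uniform) definition over $A\cap P$ supplied by Fact~\ref{obs:complete_characterization}.

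Second, padding cannot always give $|p'|=|p|$. There are only $|P^A|+|T|$ formulas of $T^P$ over $P^A$, while $|p|$ can be as large as $|A|+|T|$. So the cardinality clause should be read as $|p'|\le|p|$, which is all the realization clauses need.
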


\smallskip

It is also useful to note that, since $T$ has QE, the property of completeness for a set $A$  depends only on its first order theory (as a substructure of $\cC$):

\begin{fact}(Lemma 4.5 in \cite{ShUs322a})\label{lem:complete_QE}\label{7.5}
\begin{itemize}
\item[(i)] If $A_1\equiv A_2$, then $A_1$ is complete iff $A_2$ is
complete.
\item[(ii)] $A$ is complete iff whenever the
sentence $$\theta=:(\forall \bar y)[S(\bar y)\longleftrightarrow (\exists x\in P)
R(x,\bar y)]$$ for quantifier free 
$R,S$ is satisfied in ${\cal C}$, then
$A$ satisfies $\theta$.
%
%
\end{itemize}
\end{fact}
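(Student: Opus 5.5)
The plan is to prove (ii) first, by unwinding completeness, and then to get (i) from (ii): the criterion in (ii) is a set of sentences true in $\cC$, and whether $A$ satisfies each of them depends only on $Th(\cC|_A)$.

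For the direction ``complete implies the condition'' in (ii), fix quantifier free $R,S$ such that $\theta$ holds in $\cC$, and let $\bar b \subseteq A$. We must show $A\models S(\bar b)\leftrightarrow (\exists x\in P)R(x,\bar b)$, where the quantifier in the substructure ranges over $P^A$. Since $S$ and $R$ are quantifier free, their truth values at tuples from $A$ agree in $A$ and in $\cC$.
\begin{itemize}
\item If $A\models S(\bar b)$, then $\cC\models S(\bar b)$, so $\cC\models(\exists x\in P)R(x,\bar b)$. Completeness (Definition \ref{dfn:complete}) then gives a witness in $P\cap A$.
\item Conversely, a witness in $P^A$ is a witness in $\cC$, so $\cC\models S(\bar b)$, hence $A\models S(\bar b)$.
\end{itemize}

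For the direction ``the condition implies complete'', take any formula $\psi(\bar x,\bar y)$ and $\bar b\subseteq A$ with $\models(\exists\bar x\in P)\psi(\bar x,\bar b)$. By Hypothesis \ref{asm:1}(iii), QE down to the level of predicates, we may assume $\psi$ is quantifier free; this is what the parenthetical ``even down to the level of predicates'' is there for. Because $\theta$ has a single variable $x$, we peel off the tuple $\bar x=(x_1,\dots,x_n)$ one variable at a time.
\begin{itemize}
\item By QE, the formula $(\exists x_2\dots x_n\in P)\psi$ is equivalent to a quantifier free $R_1(x_1,\bar y)$.
\item The formula $(\exists x_1\in P)R_1$ is equivalent to a quantifier free $S_1(\bar y)$.
\end{itemize}
Apply $\theta$ for $(R_1,S_1)$ in $A$. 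Since $S_1(\bar b)$ holds in $\cC$, it holds in $A$, so there is $a_1\in P^A$ with $R_1(a_1,\bar b)$ true in $\cC$. We then iterate with $\bar b a_1$ in place of $\bar b$ and continue through $x_2,\dots,x_n$. After $n$ steps this produces $\bar a\subseteq P\cap A$ with $\models\psi(\bar a,\bar b)$, which is completeness.

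For (i), the criterion in (ii) is the requirement that $A$ satisfy every sentence $\theta$ of the stated form that holds in $\cC$. This is a property of $Th(\cC|_A)$ alone, so $A_1\equiv A_2$ gives the equivalence immediately.

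The main obstacle is the reduction in the second direction of (ii). It needs QE strong enough that every formula, including those quantified over $P$, is equivalent to a quantifier free formula in the given relational language, so that $S$ and $R$ are genuinely quantifier free and their truth is absolute between $A$ and $\cC$. This is exactly Hypothesis \ref{asm:1}(iii), together with the convention that there are no function symbols, which lets every subset be treated as a substructure.
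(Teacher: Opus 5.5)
Your proposal is correct. The forward direction of (ii) needs only that quantifier-free formulas are absolute between $A$ and ${\cal C}$. The backward direction uses QE to peel off one $P$-quantifier at a time, and (i) follows because the criterion in (ii) depends only on $Th({\cal C}|_A)$. The paper itself gives no proof; it imports the statement as a Fact from Shelah--Usvyatsov, and your argument is the standard QE argument behind it.
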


\subsection{Stability and rank}

Finally, we recall a notion of rank that ``captures'' stability over $P$ (\cite{PiSh130}, \cite{ShUs322a}). Since we will explicitly use it in later arguments, we have decided to include the definition as a part of the preliminaries. 

\smallskip

\begin{de}\label{R}
For a complete set $A$, a (partial) $n$-type $p(\bar x)$ (with parameters in ${\cal C}$), 
  sets $\Delta _1,\Delta _2$ of formulas $\psi (\bar
x,\bar y)$, and a cardinal $\lambda$, we define when $R^n_A(p,\Delta
_1,\Delta _2,\lambda)\geq \alpha$. We usually omit $n$.

\begin{itemize}
\item[(i)] $R_A(p,\Delta _1,\Delta _2,\lambda)\geq 0$ if $p(\bar x)$ is consistent. 

\item[(ii)] For $\alpha$ a limit ordinal: $R_A(p,\Delta _1,\Delta
_2,\lambda)\geq \alpha$ if $R_A(p,\Delta _1,\Delta
_2,\lambda)\geq \beta$ for every $\beta <\alpha$.

\item[(iii)] For $\alpha =\beta +1$ and $\beta$ even:
For $\mu<\lambda$ and finite $q(\bar x)\subseteq p(\bar x)$ we can
find $r_i(\bar x)$ for $i\leq\mu$ such that;

\begin{itemize}
\item[{1.}] Each $r_i$ is a $\Delta _1$-type over $A$,

\item[{2.}] For $i\not= j, r_i$ and $r_j$ are explicitly contradictory
(i.e. for some $\psi$ and $\bar c$, $\psi (\bar x,\bar c)\in r_i,
\neg\psi(\bar x,\bar c)\in r_j$).

\item[{3.}] $R_A(q(\bar x)\cup r_i(\bar x), \Delta _1,\Delta
_2,\lambda)\geq \beta$ {for all $i$}.
\end{itemize}

\item[(iv)] For $\alpha =\beta +1: \beta$ odd: For 
$\mu<\lambda$ and finite $q(\bar x)\subseteq p(\bar x)$ and $\psi
_i\in \Delta _2, \bar b_i\in A$ ($i\leq \mu$), there are $\bar d_i\in
A\cap P$ such that $R(r_i,\Delta _1,\Delta
_2,\lambda)\geq \beta$ where $r_i=q(\bar x)\cup \{(\forall\bar
z\subseteq P) \left[\psi _i(\bar x,\bar b_i,\bar z)\equiv\Psi_{\psi
_i}(\bar z,\bar d_i)\right] \colon i<\mu\}$, and $\Psi_{\psi _i}$ is as in Corollary \ref{co:defcomplete0}.
\end{itemize}

$R^n_A(p,\Delta _1,\Delta _2,\lambda)= \alpha$ if $R^n_A(p,\Delta
_1,\Delta _2,\lambda)\geq \alpha$ but not $R^n_A(p,\Delta
_1,\Delta _2,\lambda)\geq \alpha +1$. $R^n_A(p,\Delta
_1,\Delta _2,\lambda)=\infty$ iff $R^n_A(p,\Delta
_1,\Delta _2,\lambda)\geq \alpha$ for all $\alpha$.
\end{de}

The main case for applications will be $\lambda =2$. Note that the
larger $R^n_A(p,\Delta _1,\Delta _2,\lambda)$, the more evidence there
is for the existence of many types $q(\bar x)\in S_*(A)$ consistent with
$p(\bar x)$.

\medskip 

See section 5 of \cite{ShUs322a} for a detailed discussion of some basic properties of the rank; we recall here only a few. 
\medskip

\begin{fact}\label{rank}(Fact 5.3 in \cite{ShUs322a})
%
For every $p$ there is a finite $q\subseteq p$, such that 
$R_A(p,\Delta _1,\Delta _2, 2)=R_A(q,\Delta _1,\Delta _2, 2)$.
%
%
\end{fact}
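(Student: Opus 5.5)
The plan is to show that for every $\alpha$,
\[ R_A(p,\Delta_1,\Delta_2,2)\geq\alpha \iff R_A(q,\Delta_1,\Delta_2,2)\geq\alpha \text{ for every finite } q\subseteq p. \]
The fact then follows from the well-ordering of the ordinals. All ranks below are $R_A(\cdot,\Delta_1,\Delta_2,2)$, and I write $R(\cdot)$ for short.

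First I will prove monotonicity: if $p\subseteq p'$ then $R(p')\geq\alpha$ implies $R(p)\geq\alpha$. This is by induction on $\alpha$.
\begin{itemize}
\item For $\alpha=0$ it holds because a subset of a consistent type is consistent.
\item Limit stages are immediate from the inductive hypothesis.
\item For $\alpha=\beta+1$, in either parity case, the clause for $p$ quantifies over finite $q\subseteq p$. Every such $q$ is also a finite subset of $p'$. So the witnesses $r_i$ (respectively $\bar d_i$) supplied by $R(p')\geq\beta+1$ for that $q$ serve verbatim for $p$. Note that the required conclusion $R(q\cup r_i)\geq\beta$ does not mention $p$ at all, so we do not even need the inductive hypothesis here.
\end{itemize}

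Next I will prove the displayed equivalence, again by induction on $\alpha$. The direction ($\Rightarrow$) is monotonicity, so only ($\Leftarrow$) needs work.
\begin{itemize}
\item For $\alpha=0$, the hypothesis says every finite subset of $p$ is consistent, and compactness gives that $p$ is consistent.
\item For limit $\alpha$, apply the inductive hypothesis at each $\beta<\alpha$.
\item For $\alpha=\beta+1$, fix $\mu<2$ and a finite $q\subseteq p$ (and, in the odd case, $\psi_i\in\Delta_2$ and $\bar b_i\in A$). By assumption $R(q)\geq\beta+1$. Apply the defining clause for $q$ to $q$ itself, which is a finite subset of itself. This yields the required $r_i$ (resp. $\bar d_i$) with $R(q\cup r_i)\geq\beta$, which is exactly what the clause for $p$ demands at this $q$.
\end{itemize}
Hence $R(p)\geq\beta+1$.

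To conclude, let $\gamma=\min\{R(q): q\subseteq p \text{ finite}\}$. This minimum exists and is attained by some finite $q_0$, since the values are ordinals or $\infty$ and any nonempty class of these has a least element.
\begin{itemize}
\item If $\gamma=\infty$, then by the equivalence $R(p)=\infty=R(q_0)$.
\item Otherwise the equivalence gives $R(p)\geq\gamma$ and $R(p)\not\geq\gamma+1$, since $R(q_0)\not\geq\gamma+1$. So $R(p)=\gamma=R(q_0)$.
\end{itemize}

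I do not expect a real obstacle. The only point needing care is to check that both successor clauses (iii) and (iv) refer to $p$ only through its finite subsets. Once that is confirmed, the only genuinely model-theoretic input is compactness at the base step, and everything else is bookkeeping. The argument uses nothing special about $\lambda=2$; the same proof works for any $\lambda$.
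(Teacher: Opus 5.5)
Your proof is correct. The paper gives no proof of its own here and simply quotes the statement as Fact 5.3 of \cite{ShUs322a}. Your argument is the standard direct one from Definition \ref{R}: monotonicity under inclusion, then $R(p)\geq\alpha$ iff $R(q)\geq\alpha$ for every finite $q\subseteq p$. In that argument compactness is used only at $\alpha=0$, and the successor step just applies the clause for $q$ to $q$ itself. You are also right that this works for every $\lambda$, since clauses (iii) and (iv) see $p$ only through its finite subsets.

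The one tacit point is in your last step, where you read the value inequality ``$R(q)\geq\gamma$'' as the inductive clause. That step needs $\{\alpha : R(q)\geq\alpha\}$ to be an initial segment of the ordinals. This is already presupposed by the paper's definition of the value $R=\alpha$, so it is not a gap in your argument.
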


\begin{fact}\label{rankeven}(see Fact 5.3 in \cite{ShUs322a}). 
	Let $A$ be complete, $p\in S_*(A)$, $q^* \subseteq p$, and assume
	 $$R^n_A(q^*,\Delta _1,\Delta _2,\lambda)=R^n_A(p,\Delta _1,\Delta _2,\lambda)=k < \infty$$ Then $k$ is even.
\end{fact}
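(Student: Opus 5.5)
The plan is to argue by contradiction. Assume $k$ is odd, and show directly that $R_A(p,\Delta_1,\Delta_2,\lambda)\geq k+1$, which contradicts $R_A(p,\Delta_1,\Delta_2,\lambda)=k$. Since $k$ is odd, the step from $k$ to $k+1$ is governed by clause (iv) of Definition \ref{R} with $\beta=k$. So I must verify the following. For every $\mu<\lambda$, every finite $q\subseteq p$, and all $\psi_i\in\Delta_2$, $\bar b_i\in A$ ($i\leq\mu$), there are $\bar d_i\in A\cap P$ such that the corresponding type $r$ satisfies $R_A(r,\Delta_1,\Delta_2,\lambda)\geq k$. Here
\[
r=q(\bar x)\cup\{(\forall\bar z\subseteq P)[\psi_i(\bar x,\bar b_i,\bar z)\equiv\Psi_{\psi_i}(\bar z,\bar d_i)] : i<\mu\}.
\]

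The key step is to choose the $\bar d_i$ from a realization of $p$. Let $\bar c\models p$. Since $p\in S_*(A)$, the set $A\cup\bar c$ is complete and $P\cap(A\cup\bar c)=P\cap A$. Apply Corollary \ref{co:defcomplete0} to the complete set $A\cup\bar c$ and the tuple $\bar c\bar b_i\subseteq A\cup\bar c$, viewing $\psi_i$ as a formula in the variables $\bar x\bar y$ against $\bar z$. This yields $\bar d_i\subseteq (A\cup\bar c)\cap P=A\cap P$ such that $\Psi_{\psi_i}(\bar z,\bar d_i)$ defines the $\psi_i$-type of $\bar c\bar b_i$ over the $P$-part. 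To get the biconditional over all of $P$, as the formula in $r$ requires, I would instead apply Fact \ref{obs:complete_characterization} and Fact \ref{obs:uniformdef}. These give a defining parameter in $P^{\cC}$ for the whole type over $P$, and completeness of $A\cup\bar c$ (the parameter-existence formula $\exists \bar w\in P\,\forall\bar z\in P[\psi_i(\bar c,\bar b_i,\bar z)\leftrightarrow\Psi_{\psi_i}(\bar z,\bar w)]$ holds in $\cC$) lets us pull that parameter into $A\cap P$. Then $\bar c$ satisfies each formula $(\forall\bar z\subseteq P)[\psi_i(\bar x,\bar b_i,\bar z)\equiv\Psi_{\psi_i}(\bar z,\bar d_i)]$. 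These are formulas over $A$ and $p=tp(\bar c/A)$ is complete, so they all belong to $p$. Hence $r\subseteq p$.

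It remains to use monotonicity of the rank: if $r\subseteq p$ then $R_A(r,\ldots)\geq R_A(p,\ldots)$. This is a routine induction on $\alpha$ using clauses (i)–(iv), since the finite subtypes $q$ of the smaller type are among those of the larger. It gives $R_A(r,\Delta_1,\Delta_2,\lambda)\geq k$ for every such choice, so clause (iv) holds and $R_A(p)\geq k+1$, a contradiction. The hypothesis on $q^*$ is then only needed to know that the value $k$ is attained by a finite piece. If one prefers to run the argument on $q^*$ rather than $p$, the same reasoning applies, since $r\cup q^*\subseteq p$ still has rank $\geq k$.

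I expect the main obstacle to be the middle step: ensuring that the defining parameters $\bar d_i$ lie in $A\cap P$ and that the defining formula works over all of $P^{\cC}$, not just over $A\cap P$. This is exactly where $p\in S_*(A)$ (completeness of $A\cup\bar c$ with no new $P$-elements) is used, via Observation \ref{8.5}.
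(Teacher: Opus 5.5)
Your proof is correct and follows the standard route for this fact, the same pattern the paper uses in its proof of Lemma~\ref{lem:rankeven0}: assuming $k$ odd, you verify clause (iv) of Definition~\ref{R} at $\beta=k$. You use $p\in S_*(A)$, via Observation~\ref{8.5} applied to $(\exists\bar w\subseteq P)(\forall\bar z\subseteq P)[\psi_i(\bar x,\bar b_i,\bar z)\leftrightarrow\Psi_{\psi_i}(\bar z,\bar w)]$, to get $\bar d_i\in P\cap A$ whose defining formulas already lie in $p$, so $r\subseteq p$ and monotonicity gives $R(r)\geq R(p)=k$. As your argument shows, the hypothesis on $q^*$ is never actually used: only $R(p)=k$ and $p\in S_*(A)$ matter.
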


The following fact connects rank and stability. 

\begin{theorem}\label{13}\label{thm:stablerank}(\cite{PiSh130}, see also Theorem 5.4 in \cite{ShUs322a})

The following are equivalent:
\begin{itemize}
\item[(i)] $A$ is stable.
\item[(ii)] For every finite $\Delta _1$ and finite $n$ there are some
finite $\Delta _2$ and finite $m$ such that 
$R^n_A(\bar x=\bar x,\Delta _1,\Delta _2,2)\leq m$.
\end{itemize}
\end{theorem}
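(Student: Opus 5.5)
We prove the two directions separately. Throughout we use one fact. By Fact \ref{lem:complete_QE}, and because $T$ has QE, both completeness and the rank $R^n_A$ depend only on the theory of $A$ as a substructure of $\cC$. Hence a bound as in (ii) proved for $A$ transfers to every $A'\equiv A$, and conversely.

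\emph{(ii)$\Rightarrow$(i).} Fix $A'\equiv A$ and $p\in S_*(A')$. For each finite $\Delta_1$ (there are $|T|$ many), take $\Delta_2$ and $m$ as in (ii), so $R_{A'}(p,\Delta_1,\Delta_2,2)\le m<\infty$.

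\begin{itemize}
\item[1.] By Fact \ref{rank}, choose a finite $q\subseteq p$ with the same rank $k$. By Fact \ref{rankeven}, $k$ is even.
\item[2.] Since $p\in S_*(A')$ and $A'$ is complete, Corollary \ref{co:defcomplete0} applies to the finitely many relevant $\psi\in\Delta_2$ with parameters from $q$. It gives parameters $\bar d\subseteq P\cap A'$ such that $p$ contains the definitional formulas $r$ of clause (iv).
\item[3.] We claim that $q\cup r$ determines $p\restriction\Delta_1$. Suppose two types in $S_*(A')$ both contain $q\cup r$ but differ on some $\Delta_1$-formula over $A'$. Then these two restrictions are explicitly contradictory $\Delta_1$-types. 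Each has rank $\ge$ the rank of its type, which is $\ge k$ by minimality and the choice of $q$. Clause (iii) then gives rank $\ge k+1$. Clause (iv) is witnessed through $r$, and this pushes the rank of $q$ up to $k+2>k$, a contradiction.
\item[4.] So $p$ is determined by the data $\langle (q,\bar d)_{\Delta_1}\rangle$, which ranges over finite subsets of $A'$ indexed by $|T|$ many $\Delta_1$. This gives $|S_*(A')|\le|A'|^{|T|}$.
\end{itemize}

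The delicate bookkeeping in this direction is step 3: checking that parity and the odd clause (iv) indeed force an increase in rank.

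\emph{(i)$\Rightarrow$(ii).} We argue by contrapositive. Suppose that for some finite $\Delta_1$ and $n$, every finite $\Delta_2$ and $m$ satisfy $R^n_A(\bar x=\bar x,\Delta_1,\Delta_2,2)>m$. Enumerate the finite $\Delta_2$'s in an increasing chain exhausting all formulas. Unwinding the rank gives, for each $m$, a finite binary tree of depth $m$:
\begin{itemize}
\item[1.] even levels split into explicitly contradictory $\Delta_1$-formulas over $A$;
\item[2.] odd levels adjoin, for the given $\psi_i\in\Delta_2$ with parameters already used, definitions $\Psi_{\psi_i}(\bar z,\bar d_i)$ with $\bar d_i\in P\cap A$.
\end{itemize}
These finite configurations are expressible by sentences true in $(\cC,A)$ in an expanded language with a predicate for $A$. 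By compactness, together with Fact \ref{lem:complete_QE} to keep the new set complete and elementarily equivalent to $A$, we obtain $A'\equiv A$ of size $\mu$ carrying a tree of height $\mu$ with these properties. Here $\mu$ is chosen with $\mu^{|T|}<2^\mu$, for example $\mu=\beth_\omega(|T|)$ strengthened via cofinality.

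Each branch is then a consistent type. Because the odd levels handle every formula with parameters from the branch, the branch satisfies the criterion of Observation \ref{8.5}: every existential over $P$ is witnessed in $P\cap A'$. So the branch extends to a type in $S_*(A')$. Distinct branches are explicitly contradictory, so $|S_*(A')|\ge 2^\mu>|A'|^{|T|}$, and $A$ is not stable.

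The main obstacle is this last step. We must interleave the odd-level definitions so that every formula with parameters introduced along the branch gets defined over $P\cap A'$, while the tree remains realizable in a single $A'\equiv A$. The plan is to handle this with an $\omega$-type bookkeeping over the increasing $\Delta_2$'s inside the compactness argument.
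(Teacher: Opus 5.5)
The paper does not prove this theorem; it quotes it from \cite{PiSh130} and \cite[Theorem 5.4]{ShUs322a}. So I judge your argument on its own. There is a genuine gap in (i)$\Rightarrow$(ii), exactly at the step you flag.

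To place a branch in $S_*(A')$ via Observation \ref{8.5}, you need a witness in $P\cap A'$ for \emph{every} formula $\psi(\bar x,\bar b,\bar z)$ with $\bar b$ ranging over all of $A'$. Your branches only answer challenges whose parameters occur along the branch. Your unwinding of clause (iv) has the same restriction, although that clause quantifies over arbitrary $\bar b_i\in A$. Nothing prevents a completion of such a branch from having, for some $\bar b\in A'$ off the branch, a solution in $P$ but none in $P\cap A'$.

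You also cannot repair a branch afterwards. It has size $|A'|$, while Theorem \ref{co:complete_char} only handles types of size $<\lambda$ over a $\lambda$-complete set. And a plain compactness argument does not force the set $A'$ it produces to consist only of elements that are challenged on every branch. You need an extra device. Two options:
\begin{itemize}
\item[1.] Skolemize $\mathcal{C}|_A$ and let $A'$ be the Skolem hull of the tree parameters. Then schedule every pair (formula, Skolem term) as a challenge on every branch, at a level after the term's constants have appeared. This uses $2^{<\mu}=\mu$.
\item[2.] Take $A'\equiv A$ saturated of size $\kappa>|T|$, as in Fact \ref{fct:satstable}. By saturation, upgrade ``rank $>m$ for all finite $\Delta_2$ and all $m$'' to rank $\ge\omega$ with $\Delta_2$ equal to all formulas. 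This property survives unions of chains (Fact \ref{rank}). For types of size $<\kappa$ it also survives one split or one definitional answer, by saturation plus pigeonhole over the finite $\Delta_1$. Now build the tree by a recursion of length $\kappa$ that interleaves all challenges $(\psi,\bar b)$ with $\bar b\in A'$. This yields $2^\kappa$ members of $S_*(A')$, contradicting Fact \ref{fct:satstable}.
\end{itemize}
Separately, $\mu=\beth_\omega(|T|)$ does not satisfy $\mu^{|T|}<2^\mu$: its cofinality is $\omega$, so $\mu^{|T|}=2^\mu$. You need a strong limit of cofinality $>|T|$.

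In (ii)$\Rightarrow$(i) the counting is fine, but step 3 is argued incorrectly. It is false that $q\cup r$ determines $p\restriction\Delta_1$ among all of $S_*(A')$. Another type containing $q\cup r$ may have rank $<k$, and no minimality gives it rank $\ge k$.

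What is true is that $p\restriction\Delta_1=\{\varphi: R(q\cup\{\varphi\},\Delta_1,\Delta_2,2)\ge k\}$, where $k=R(q)$. Indeed, suppose $\neg\varphi\in p$ and also $R(q\cup\{\varphi\})\ge k$. Then both $q\cup\{\varphi\}$ and $q\cup\{\neg\varphi\}$ have rank $\ge k$. Since $k$ is even, clause (iii), applied to every finite $q'\subseteq q$, already gives $R(q)\ge k+1$, a contradiction. So $p\restriction\Delta_1$ is a function of $q$ alone. The map $p\mapsto\langle q_{\Delta_1}\rangle$ is injective, because two types sharing $q_{\Delta_1}$ both have rank $R(q_{\Delta_1})$.

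The formulas $r$ and clause (iv) play no role here, and there is no step to $k+2$. The hypothesis $p\in S_*(A')$ is used only through the parity Fact \ref{rankeven}.
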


\begin{fact} (Corollary 5.5 in \cite{ShUs322a})\label{fct:satstable}
	In Definition \ref{6}(iv), it is not necessary to consider all $A' \equiv A$. 
	Specifically, a complete set $A$ is stable if and only if $|S_*(A')| \le |A'|^{|T|}$ for some $A' \equiv A$ saturated, $|A'|>|T|$. 
	
	Moreover, it is enough that for $A'$ as above, $|S_*(A')| < 2^{|A'|}$.
\end{fact}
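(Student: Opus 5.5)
Fact~\ref{fct:satstable} states that a complete set $A$ is stable if and only if $|S_*(A')|\le|A'|^{|T|}$ for some saturated $A'\equiv A$ with $|A'|>|T|$, and that in fact $|S_*(A')|<2^{|A'|}$ already suffices. The plan is to go through the rank criterion of Theorem~\ref{thm:stablerank} and to use that, by Fact~\ref{lem:complete_QE}, completeness, $S_*$ and the rank $R_A$ all depend only on the theory of $A$ as a substructure. The forward direction is immediate from Definition~\ref{6.5}(ii). The work is in showing: if $A$ is \emph{not} stable, then every saturated $A'\equiv A$ with $|A'|=\kappa>|T|$ has $|S_*(A')|\ge 2^\kappa$. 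This yields both the main statement and the "moreover" clause, since $2^\kappa>\kappa^{|T|}$ fails only when $\kappa^{|T|}=2^\kappa$, and in that case the bound $2^\kappa$ is still a contradiction to $|S_*(A')|<2^{|A'|}$.

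First step: translate non-stability into unbounded rank. By Theorem~\ref{thm:stablerank}, non-stability gives a finite $\Delta_1$ and $n$ such that for every finite $\Delta_2$ and every $m$ we have $R^n_A(\bar x=\bar x,\Delta_1,\Delta_2,2)>m$. By the remark above, the same holds with $A'$ in place of $A$: the statement "$R\ge m$" for finite $\Delta$'s and finite $m$ is expressed by first-order conditions on $A$, namely existence of parameters in $A$ and in $P\cap A$, together with consistency, which by QE and Fact~\ref{lem:complete_QE} reduces to the theory of $A$.

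Second step: build a binary tree of depth $\kappa$ inside $A'$. Enumerate $A'$ and all pairs $(\psi,\bar b)$ with $\psi\in L$ and $\bar b\in A'$, in order type $\kappa$. Using saturation of $A'$ (and Fact~\ref{fct:typeoverP} for the parameters in $P$), construct partial types $p_\eta$ for $\eta\in 2^{<\kappa}$ by alternating two kinds of moves.
\begin{itemize}
\item \textbf{Even-type splitting moves.} These split $p_\eta$ into two explicitly contradictory $\Delta_1$-types over $A'$.
\item \textbf{Odd-type moves.} These use clause (iv) of Definition~\ref{R} to add the definitional constraint $(\forall\bar z\in P)[\psi(\bar x,\bar b,\bar z)\equiv\Psi_\psi(\bar z,\bar d)]$ with $\bar d\in P\cap A'$, for the next pair $(\psi,\bar b)$ in the enumeration.
\end{itemize}
The invariant to maintain at each finite stage is that every finite part of $p_\eta$ still has rank $\ge m$ for all $m$, relative to all finite $\Delta_2$. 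At limit stages take unions; consistency of the union follows from the finite character of the rank (Fact~\ref{rank}).

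Third step: read off the types. For each branch $\eta\in 2^\kappa$, the type $p_\eta$ is consistent and includes definitional constraints for all formulas over $A'$. By Observation~\ref{8.5}, any completion of $p_\eta$ that respects these constraints lies in $S_*(A')$: whenever $\exists\bar z\in P\,\psi(\bar c,\bar b,\bar z)$ holds, the definition $\Psi_\psi(\bar z,\bar d)$ is over $P\cap A'$, which is an elementary submodel of $P$ by Fact~\ref{obs:complete_characterization}, so a witness exists in $P\cap A'$. Distinct branches are explicitly contradictory, so they give $2^\kappa$ distinct types.

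Main obstacle. The delicate point is the second step. We must keep the rank "infinite" through $\kappa$ many steps, which means showing that "rank $\ge m$ for all $m$" survives both an even split and an odd-type extension. With $\lambda=2$ the rank is finite-valued for finite $\Delta$'s, so I would instead track rank $\ge\omega$ with respect to the growing finite sets $\Delta_2$. I would also need to handle the completion so that $P$ is not enlarged. My first attempt at both would be to choose each $\bar d$ by realizing, in the $\kappa$-saturated $P\cap A'$, the type describing "rank stays $\ge m$ for all $m$"; this is a small type over $P\cap A'$, so Fact~\ref{fct:typeoverP} applies.
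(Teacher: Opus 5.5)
The paper gives no proof of this fact; it only quotes it from \cite{ShUs322a}. Your overall route is the natural one and it works. Theorem~\ref{thm:stablerank} together with QE gives a finite $\Delta_1$ with $R^n_{A'}(\bar x=\bar x,\Delta_1,\Delta_2,2)\ge\omega$ for every finite $\Delta_2$. A tree of height $\kappa$ in the saturated $A'$, carrying definitional constraints for all pairs $(\psi,\bar b)$, then yields $2^\kappa$ members of $S_*(A')$ by Observation~\ref{8.5}.

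\textbf{The realization step.} Your proposed way past the step you call the main obstacle is wrong as stated. The conditions on the new parameter $\bar w$ are $R(q\cup\{(\forall\bar z\subseteq P)[\psi(\bar x,\bar b,\bar z)\leftrightarrow\Psi_\psi(\bar z,\bar w)]\},\Delta_1,\Delta_2,2)\ge m$. These are formulas of $Th(\mathcal{C}|_{A'})$, since their quantifiers range over $A'$ and $P\cap A'$. Their parameters, $\bar b$ and those of $q$, lie outside $P$. So this is neither a type over $P\cap A'$ nor a type of $\mathcal{C}$, and Fact~\ref{fct:typeoverP} does not apply.

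Use the saturation of $A'$ itself. Consider $\{P(\bar w)\}$ together with these conditions for all finite $q\subseteq p_\eta$, all finite $\Delta_2$ and all $m<\omega$. This set has fewer than $\kappa$ parameters and is finitely satisfiable in $A'$. Indeed, finitely many of the conditions follow from a single one: take the union of the $q$'s, the union of the $\Delta_2$'s with $\psi$ added, and the largest $m$, using that $R$ only decreases as $p$ or $\Delta_2$ grows. That single condition is supplied by the odd clause of Definition~\ref{R}.

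The splitting move works the same way once you take $r_0=\{\varphi(\bar x,\bar c)\}$ and $r_1=\{\neg\varphi(\bar x,\bar c)\}$, fixing $\varphi\in\Delta_1$ by pigeonhole. The rank is not finite-valued here. The invariant is simply $R(p_\eta,\Delta_1,\Delta_2,2)\ge\omega$ for every finite $\Delta_2$. It survives both moves and unions because $R(p)$ is the minimum of $R(q)$ over finite $q\subseteq p$ (Fact~\ref{rank}). No extra care about new elements of $P$ is needed: the constraint for $\psi=(x_j=z)$ together with Observation~\ref{8.5} already rules them out.

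\textbf{The main clause.} Your deduction of the main clause fails exactly in the case you single out. When $\kappa^{|T|}=2^\kappa$, the bound $|S_*(A')|=2^\kappa$ contradicts only the hypothesis of the ``moreover'' clause, not $|S_*(A')|\le\kappa^{|T|}$. This cannot be patched, because the main clause, read literally, is false there.

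Here is an example. Let $P$ be a pure infinite set and add two sorts $Q,S$ carrying a random bipartite graph $R\subseteq S\times Q$; Hypothesis~\ref{asm:1} holds. Any $A=P^A\cup Q^A$ with $P^A,Q^A$ infinite is complete and unstable, since every subset of $Q^A$ is the $R$-neighbourhood of some $s\in S$ with $tp(s/A)\in S_*(A)$. But $\mathcal{C}|_A$ is just two infinite pure sets, so there is a saturated $A'\equiv A$ of size $\beth_\omega$. For it, $|S_*(A')|\le 2^{\beth_\omega}=\beth_\omega^{\aleph_0}=|A'|^{|T|}$.

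So the main clause has to be read with $|A'|=|A'|^{<|A'|}$. Then $|A'|^{|T|}=|A'|<2^{|A'|}$ and your argument applies. This assumption also guarantees that a saturated $A'$ exists, which your ``immediate'' forward direction tacitly uses. The ``moreover'' clause is what your argument proves in full generality.
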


\medskip

We  often omit the superscript and the subscript in the rank $R^n_A$, and write simply $R$ (at least when 
$n$ and $A$ are easily deduced from the context).

%

\section{True minimality of types}


Here we expand on some observations made in section 5 of \cite{Us24}. 
Our goal is to formulate general statements that will be convenient for later use (without further recollection of  the definition or explicit properties of the rank). The following lemma follows easily from the definitions of the rank, and the fact that it is finitary (Fact \ref{rank}):

\begin{lem}
\label{lem:minimal}
	Let $A$ be complete,
$p$ a 
partial type over $A$, and let $X$ be a property of partial types over $A$ such that $X$ holds for $p$. Assume, in addition, that $R^n_A(p,\Delta _1,\Delta _2, 2)=k < \omega$. 

Then there exists $q$ such that:

\begin{enumerate}
 \item
 	$q$ is a finite set of formulae over $A$
 \item
 	$p' = p \cup q$ is a partial type over $A$
\item 
	$X$ holds for $p'$
\item
	$R^n_A(p',\Delta _1,\Delta _2,\lambda)$ is minimal with respect to the  clauses (b) and (c). 
\end{enumerate}

%

\end{lem}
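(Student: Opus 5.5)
The plan is a well-ordering argument on the rank, using that the relevant rank values form a subset of $\{0,1,\dots,k\}$. I read the $\lambda$ in clause (d) as $2$, as in the hypothesis. Consider the family $\mathcal{Q}$ of all finite sets $q$ of formulae over $A$ such that $p\cup q$ is consistent and $X$ holds for $p\cup q$. This family is nonempty: $q=\emptyset$ belongs to it, since $p$ is consistent (its rank is $\geq 0$) and $X$ holds for $p$ by assumption.

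The first step is to bound the ranks. Whenever $p\subseteq p'$, we have $R_A(p',\Delta_1,\Delta_2,2)\le R_A(p,\Delta_1,\Delta_2,2)$. This monotonicity follows by a straightforward induction on $\alpha$ from Definition \ref{R}. If $R(p')\ge\alpha$, then $R(p)\ge\alpha$, because every finite $q\subseteq p$ is also a finite subset of $p'$. The witnesses in clauses (iii) and (iv) are therefore available for $p$ whenever they are available for $p'$.

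Consequently every $q\in\mathcal{Q}$ gives $R(p\cup q)\in\{0,\dots,k\}$. The value is $\geq 0$ because $p\cup q$ is consistent, and it is $\le k$ by monotonicity.

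The second step is to pick $q\in\mathcal{Q}$ for which $R(p\cup q)$ is least among the values $\{R(p\cup q'):q'\in\mathcal{Q}\}$. This minimum exists because the set of values is a nonempty subset of the finite set $\{0,\dots,k\}$, and it is attained. The resulting $q$ satisfies clauses (a), (b) and (c) by the definition of $\mathcal{Q}$, and clause (d) by the choice of the minimum.

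The only real subtlety is the interpretation of clause (d). Minimality should mean that no finite extension $q'\supseteq q$ with $p\cup q'$ consistent and satisfying $X$ lowers the rank. This follows because any such $q'$ is itself in $\mathcal{Q}$.

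Fact \ref{rank} is what justifies restricting to finite $q$. Finite extensions capture all rank values that arbitrary extensions could achieve, since the rank of any type is already witnessed by a finite subset of it. For the statement as given, however, only finiteness of the rank is actually needed for the well-foundedness. I expect the monotonicity check to be the only step requiring any verification, and it is routine.
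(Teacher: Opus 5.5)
Your argument is correct and is essentially what the paper means by ``follows easily from the definitions of the rank''. Monotonicity of $R$ under extension bounds all the relevant values by $k$, and you then take $q$ of least rank among finite extensions satisfying $X$, which is exactly the minimality used later (e.g.\ in Observation \ref{obs:phiminimal}). One caveat on your closing remark: Fact \ref{rank} lets finite extensions realize the minimum over \emph{arbitrary} extensions only if $X$ also holds for $p\cup q_0$, where $q_0$ is the finite witness; this is true for the property ``consistent and of cardinality $<\lam$'' used in Lemma \ref{lem:rankeven0}, but not for arbitrary $X$, and in any case your proof does not depend on that remark.
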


Note that the property $X$ in the previous definition does not have to be first order definable (and indeed usually it will not be). However, we will always implicitly assume that  $X$ \emph{implies that $p$ is indeed a type, i.e., is consistent}.

We call a type $p'$ satisfying the clauses (i) -- (iv) of the Lemma above \emph{a minimal extension of $p$ over $A$ with respect to $\Delta_1$, $\Delta_2$, $\lam$, and $X$}. If $p$ is clear from the context, or $p = [x=x]$, we say that $p'$ is a \emph{minimal type} with respect to $\Delta_1$, $\Delta_2$, $\lam$, and $X$.

In order to ensure minimal types have the minimality properties that one would expect, we need to strengthen the notion as follows:

\begin{de}
\begin{enumerate}
\item 
 We call a minimal extension $p'$ of $p$ over a set $A$ (with respect to $\Delta_1$, $\Delta_2$, $\lam$, and $X$), a \emph{truly minimal} extension if, in addition to the properties (i) -- (iv), it is also the case that the rank $R^n_A(p',\Delta _1,\Delta _2,\lambda)$ is even. We call $p'$ a truly minimal extension over $A$ with respect to $\Delta_1$,  $\lam$, and $X$ if it is truly minimal with respect to  $\Delta_1$, $\Delta_2$, $\lam$, and $X$ for some $\Delta_2$. We call it truly minimal with respect to $\Delta_1$ and $X$ if $\lam = 2$. Finally, we call $p'$ a \emph{\ph-truly minimal extension of $p$  over $A$ with respect to $X$} if it is truly minimal over $A$ with respect to $\Delta_1 = \set{\ph, \neg\ph}$ and $X$.
\item
We call $p'$ a \emph{truly minimal extension} of $p$ over $A$ with respect to $X$ if it is $\ph$-truly minimal over $A$ with respect to $X$ for every formula $\ph$. 
\end{enumerate}
\end{de}


\begin{obs}\label{obs:phiminimal}
 Let $p$ is a {\ph-truly minimal extension} of a type $p_0$ with respect to a property $X$ over a set $A$. Then for no $a \in A$ do both $p \cup\set{\ph(x,a)}$ and $p \cup \set{\neg\ph(x,a)}$ have the property $X$.  
\end{obs}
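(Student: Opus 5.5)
Suppose, towards a contradiction, that for some $a\in A$ both $p\cup\set{\ph(x,a)}$ and $p\cup\set{\neg\ph(x,a)}$ have property $X$. Write $\Delta_1=\set{\ph,\neg\ph}$, and fix $\Delta_2$ witnessing true minimality. Let $k=R_A(p,\Delta_1,\Delta_2,2)$; by assumption $k<\omega$ and $k$ is even. The plan has two steps: first use minimality to show each of the two extensions has rank exactly $k$, then use the definition of the rank at an even stage to show $p$ would have rank $\geq k+1$.

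For the first step, write $p=p_0\cup q$ with $q$ finite over $A$, as in Lemma \ref{lem:minimal}. Each $p\cup\set{\ph(x,a)}$ and $p\cup\set{\neg\ph(x,a)}$ is then of the form $p_0\cup q'$ with $q'=q\cup\set{\pm\ph(x,a)}$ finite over $A$, and it has property $X$. By clause (iv) of Lemma \ref{lem:minimal}, neither rank can drop below $k$. Ranks are monotone: adding formulas cannot increase the rank, which is immediate by induction on $\alpha$ from Definition \ref{R}. Hence both ranks equal $k$ exactly.

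For the second step, apply clause (iii) of Definition \ref{R} with $\alpha=k+1$ and $\beta=k$ even, taking $\lam=2$ so that $\mu\le 1$. Given any finite $q_0\subseteq p$, set $r_0=\set{\ph(x,a)}$ and $r_1=\set{\neg\ph(x,a)}$. These are $\Delta_1$-types over $A$, and they are explicitly contradictory. By monotonicity and the first step, $R(q_0\cup r_i)\ge R(p\cup r_i)=k$ for $i=0,1$. Thus $R(p)\ge k+1$, contradicting $R(p)=k$.

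The only delicate point is to check that the even-step clause quantifies over finite $q\subseteq p$ in a way that our witnesses satisfy. It does, because we get the bound for every finite subset via monotonicity; this is exactly where the evenness of $k$ from true minimality is needed, since at an odd stage the successor clause would instead be clause (iv), which gives no such splitting. Monotonicity itself is routine.
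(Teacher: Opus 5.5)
Your proof is correct and follows the paper's argument. Minimality forces both $p\cup\{\varphi(x,a)\}$ and $p\cup\{\neg\varphi(x,a)\}$ to have rank exactly $k$, and since $k$ is even, clause (iii) of Definition \ref{R} with $r_0=\{\varphi(x,a)\}$ and $r_1=\{\neg\varphi(x,a)\}$ yields rank $\ge k+1$, a contradiction.

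The only difference is in the last step. The paper first uses Fact \ref{rank} to pass to a single finite $q\subseteq p$ of rank $k$. You instead verify the splitting condition for every finite $q_0\subseteq p$ directly by monotonicity, which avoids Fact \ref{rank} and matches the universal quantifier over finite subsets in clause (iii) more precisely.
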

\begin{proof}
Let $p^+ = p \cup\set{\ph(x,a)}$, $p^- = p\cup \set{\neg\ph(x,a)}$, and assume that both $p^+$ and $p^-$ have the property $X$. 

Let $\Delta_1 = \set{\ph,\neg\ph}$, and let $\Delta_2$ be as in the definition of $\ph$-true minimality. In particular, $R(p,\Delta_1,\Delta_2,2) = k$ is even. By Fact \ref{rank}, there is $q \subseteq p$  finite such that $k=R(q,\Delta_1,\Delta_2,2)$.  Denote $q^+ = q \cup\set{\ph(x,a)}$, $q^- = q\cup \set{\neg\ph(x,a)}$.

Since $p$ is a minimal extension with respect to ($\Delta_1, \Delta_2, 2$) and $X$, $R(p,\Delta_1, \Delta_2, 2) \le R(p^+ \Delta_1, \Delta_2, 2)$. Since $p \subseteq p^+$, these ranks are, in fact, equal; similarly for $p^-$. So $R(p^+,\Delta_1,\Delta_2,2) = R(p^-,\Delta_1,\Delta_2,2) = k$.

Now, $k = R(q,\Delta_1,\Delta_2,2) \ge R(q^+,\Delta_1,\Delta_2,2) \ge R(p^+, \Delta_1, \Delta_2, 2) = k$; similarly for $q^-$. So we have:
\begin{enumerate}
\item 
	$q \subseteq p$ finite.
\item
	$q^+,q^-$ are explicitly $\Delta_1$-contradictory extensions of $q$ of rank $k$.
\item
	$k$ is even.
\end{enumerate}

Therefore, by the definition of the rank, $R(p,\Delta_1,\Delta_2,2) \ge k+1$, a contradiction.
\end{proof}

\medskip

Lemma \ref{lem:minimal} tells us that any type of finite rank can be extended to a minimal type; we shall revisit it later (see Lemma \ref{lem:rankeven0}) to show that under certain conditions it can be strengthened to obtain a truly minimal type.

\section{Complete sets with saturated $P$-part}

\begin{de}
 Let $A$ be a set, $\lam$ an infinite cardinal. We say that  $A$ is \emph{$\lam$-complete } if for every (partial) type $p$ over $A$ of cardinality $<\lam$, if $p$ is realized in $P^\cC$, then it is realized in $P^A$. 
\end{de}

Clearly, $A$ is $\aleph_0$-complete if and only if it is complete, and if $A$ is $\lam$-complete for some $\lam$, then it is also $\mu$-complete for all $\mu < \lam$, in particular, $\lam$-completeness implies completeness. 

\begin{lem}
 A set $A$ is $\lam$-complete if and only if $A$ is complete and $P^A$ is $\lam$-compact. 
\end{lem}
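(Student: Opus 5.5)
We prove the two directions separately. Throughout, ``$\lam$-compact'' for $P^A$ means: every type over $P^A$ in the language of $T^P$ of cardinality $<\lam$ that is finitely satisfiable in $P^A$ is realized in $P^A$.

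For the forward direction, assume $A$ is $\lam$-complete. Then $A$ is $\aleph_0$-complete, and $\aleph_0$-completeness is exactly completeness. The point is that the definition of completeness concerns a single formula $\psi(\bar x,\bar b)$ together with $\bar x\in P$, which is a finite type realized in $P^\cC$.

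To see that $P^A$ is $\lam$-compact, take a $T^P$-type $r(\bar x)$ over $P^A$ with $|r|<\lam$ that is finitely satisfiable in $P^A$.
\begin{itemize}
\item By Fact \ref{lem:complete_QE} and Hypothesis \ref{asm:1}(ii), $A$ complete gives $P^A\prec P^\cC$ (this is also in Fact \ref{obs:complete_characterization}).
\item Hence $r$ is consistent with $T^P$ in $\cC^P$. Since $\cC^P$ is a monster model of $T^P$, $r$ is realized by some tuple in $P^\cC$.
\item Each $T^P$-formula is equivalent, for elements of $P$, to an $L(T)$-formula relativized to $P$. So $r\cup\{P(x_i)\}$ is a partial type over $A$ of size $<\lam$ realized in $P^\cC$.
\item By $\lam$-completeness it is realized in $P^A$.
\end{itemize}

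For the converse, assume $A$ is complete and $P^A$ is $\lam$-compact. Let $p(\bar x)$ be a partial type over $A$ with $|p|<\lam$ that is realized in $P^\cC$. We may add the formulas $P(x_i)$ to $p$ without changing its size or the hypothesis.
\begin{itemize}
\item By Fact \ref{fct:typeoverP} (applied coordinatewise, or to tuples), $p$ is equivalent to a $T^P$-type $p'$ over $P^A$ with $|p'|=|p|<\lam$.
\item $p'$ is realized in $P^\cC$, so it is consistent. Since $P^A\prec P^\cC$ by completeness, each finite subset of $p'$ is realized in $P^A$. This is also the finite case of Fact \ref{fct:typeoverP}.
\item By $\lam$-compactness of $P^A$, $p'$ is realized in $P^A$, and therefore so is $p$.
\end{itemize}

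The only delicate point is the translation of Fact \ref{fct:typeoverP}. Its equivalence ``$p\equiv p'$'' must be read as holding for elements of $P^\cC$, so that a realization of $p'$ in $P^A$ really realizes $p$. It must also be applied to tuples rather than single variables; the same proof (uniform definability, Corollary \ref{co:defcomplete0}) covers tuples. Once this is checked, both directions are immediate.
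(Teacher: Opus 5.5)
Your proof is correct and follows the paper's route. The paper calls the forward direction ``clear'', and your argument is exactly that check: $P^A\prec P^\cC$ from completeness, saturation of $\cC^P$, then relativizing to $P$. The converse is Fact \ref{fct:typeoverP}; its final sentence already says that a type of size $<\lam$ over a complete $A$ concentrating on $P$ is realized in $P^A$ when $P^A$ is $\lam$-compact, so re-deriving it and checking the tuple case is harmless but not needed.
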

\begin{proof}
 The ``only if'' direction is clear. The ``only if'' direction follows from Fact \ref{fct:typeoverP}.
\end{proof}

\smallskip
\begin{theorem}\label{co:complete_char}
Let $A$ be a set. Then the following are equivalent:
\begin{enumerate}
\item $A$ is $\lam$-complete.
\item For every partial  type $p(\x)$ over $A$ of size $<\lam$ (not necessarily realized in $A$) and every  collection \set{\psi_i(\x,\b_i,\z_i) \colon i<\ka} of formulae over $A$ (with $\ka<\lam$), there are  $\set{\d_i \colon i<\ka} \subseteq P^A$ such that 
the following is a type over $A$:

\[
	p(\x) \cup \left\{  \left[(\exists \z_i \subseteq P)  \psi_i(\x,\b_i,\z_i)\right] \longrightarrow \psi_i(\x,\b_i,\d_i)  \colon 
	i<\ka \right\}
\]

%
%
\item
$P^A \elem P^\cC$ is $\lam$-compact, and:

For every partial  type $p(\x)$ over $A$ of size $<\lam$ (not necessarily realized in $A$) and every 
 collection \set{\psi_i(\x,\b_i,\y_i) \colon i<\ka} of formulae over $A$ (with $\ka < \lam)$, there are  $\set{\d_i \colon i<\ka} \subseteq P^A$ such that  the following is a  type over $A$:
\[
p(\x) \cup \left\{  (\forall \y_i \subseteq P) \left[ \psi_i(\x,\b_i,\y_i) \longleftrightarrow \Psi_{\psi_i}(\y_i,\d_i) \right]  \colon 
	i<\ka \right\}
%
\]
where  $\Psi_{\psi_i}(\y_i,\z_i)$ is the defining formula for $\psi_i = \psi_i(\x\x^i, \y_i)$ as in Corollary \ref{co:defcomplete0}; so $\len(\x^i) = \len(\b^i)$.

\item
$P^A \elem P^\cC$ is $\lam$- compact, and:

For every partial  type $p(\x)$ over $A$ of size $<\lam$ (not necessarily realized in $A$) and a formula $\psi(\x,\b,\y)$  over $A$ , there is  $\d \subseteq P^A$ such that  the following is a type over $A$:
\[
p(\x) \cup \left\{  (\forall \y \subseteq P) \left[ \psi(\x,\b,\y) \longleftrightarrow \Psi_{\psi}(\y,\d) \right]  \colon 
	i<k \right\}
%
\]
where  $\Psi_\psi(\y,\z)$ is the defining formula for $\psi = \psi(\x\x',\y)$.

\end{enumerate}

\begin{proof}
 (i) $\implies$ (ii): Assume $A$ is $\lam$-complete, and let $p$, $\psi_i$ be as in (ii). For every finite $p_0 \subseteq p$
 and every finite $W \subseteq \ka$, consider the following formula:
 
 \[
 	\Phi_{p_0, W}(\z_i \colon i \in W) = \exists \x \left(\bigwedge p_0(x) \bigwedge_{i\in W} \left[(\exists \z_i \subseteq P)  \psi_i(\x,\b_i,\z_i)\right] \longrightarrow \psi(\x,\b_i,\z_i)\right)
 \]
 
 And let 
 
 \[
 	\Delta  = \left\{  \Phi_{p_0}(\z_0, \ldots, \z_{k-1}) \colon p_0 \subseteq p, W \subseteq \ka \text{ \emph{finite}} \right\}
 \]

 Clearly, $\Delta$ is partial type over $A$ of size $\le |p| + \ka < \lam$ which is realized in $P^\cC$. Since $A$ is $\lam$-complete, there exist $\d = \seq{d_i \colon i<k} \subseteq P^A$ such that $\cC \models \Delta(\d)$; it is easy to see that they are as required in (ii).
 
%

The implications (iii) $\implies$ (iv) and (iv) $\implies$ (i) are clear  (for the latter, take e.g. $p_0 = [x=x]$). Similarly for (ii) $\implies$ (i). 
 
 
%

 (i) $\implies$ (iv): Since $A$ is complete, $P^A \elem P^\cC$.  Now let $p$, $\psi_i$ be as in (iv). Similarly to the previous proof, consider (for every finite $p_0 \subseteq p$ and $W \subseteq \ka$):
 
\[
 	\Phi = \Phi_{p_0, W}(\z_i \colon i \in W) = \exists \x \left(\bigwedge p_0(x) \bigwedge_{i \in W} (\forall \y_i \subseteq P) \left[ \psi_i(\x,\b_i,\y_i) \longleftrightarrow \Psi_{\psi_i}(\y_i,\z_i) \right] \right)
 \]
 
Again, we claim that $\cC\models (\exists \z \subseteq P) \Phi( \z)$.  Indeed, we are not asking (yet) that $\z$ be in $P^A$; so taking any $\a \models p$ (for $\x$), and any $\d'_i \in P^\cC$ as in Observation \ref{obs:uniformdef} for each $\psi_i(\a\b_i,\y_i)$ works. 

Now let $\Delta = \set{\Phi_{p_0,W} \colon p_0, W \text{ finite}}$. Since $\Delta$ is over $A$, and $A$ is $\lam$-complete, we can find $\d$ in  $P^A$ realizing $\Delta$, and it is as required in (iv).

\end{proof}

\end{theorem}

\begin{lem}\label{lem:rankeven0}
 	
	Let $A$ be $\lam$-complete,
	$p$ a  partial type over $A$ of cardinality $<\lam$, and that $R^n_A(p,\Delta _1,\Delta _2, \mu) < \omega$ for some $\Delta_1, \Delta_2, \mu$. 
	
	Let 
	$p'$ be a minimal extension of $p$ with respect to $\Delta_1$,   $\Delta_2$, $\lam$, and the property ``of cardinality $<\lam$''. Then $p'$ is truly minimal with respect to the above $\Delta_1$,   $\Delta_2$, $\mu$, and property. 


\end{lem}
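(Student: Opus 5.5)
We argue by contradiction. Write $k = R(p',\Delta_1,\Delta_2,\mu)$, which is finite and minimal among ranks of extensions $p\cup q$ with $q$ finite over $A$. Suppose $k$ is odd, say $k=\beta+1$ with $\beta$ even. The failure of $R(p',\Delta_1,\Delta_2,\mu)\ge k+1$ is a failure of clause (iv) of Definition \ref{R} at the odd level $k$. We will exhibit an extension of $p'$ by formulas over $A$ that still has rank $k$, and then show this forces clause (iv) to hold after all.

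Concretely, failure of rank $\ge k+1$ means there are a finite $q\subseteq p'$, some $\mu'<\mu$, and $\psi_i\in\Delta_2$, $\bar b_i\in A$ ($i\le\mu'$), such that for every choice of $\bar d_i\in A\cap P$ the type
\[
r=q\cup\left\{(\forall \bar z\subseteq P)\left[\psi_i(\bar x,\bar b_i,\bar z)\equiv \Psi_{\psi_i}(\bar z,\bar d_i)\right]\colon i<\mu'\right\}
\]
has rank $<k$.

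We now apply Theorem \ref{co:complete_char}(iii) to the type $p'$ (which has size $<\lam$, since it is $p$ plus finitely many formulas) and the family $\{\psi_i\}_{i<\mu'}$. Here we use that $\mu'<\mu\le\lam$ in the relevant case; this may need the convention that $\mu$ is $2$ or at most $\lam$. The theorem produces $\bar d_i\in P^A$ such that $p'' = p'\cup\{(\forall\bar z\subseteq P)[\psi_i\equiv\Psi_{\psi_i}(\bar z,\bar d_i)]\colon i<\mu'\}$ is consistent. Then $p''$ is an extension of $p$ over $A$ with cardinality $<\lam$, so it has the property $X$.

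Next we bring $p''$ back within the scope of minimality. By Fact \ref{rank} (rank is finitary for $\mu=2$; in general we take a finite subset if needed), there is a finite sub-piece $p'\cup q''$ of $p''$ with the same rank. By minimality of $p'$ with respect to extensions by finite sets having property $X$, we get $R(p'')\ge R(p'\cup q'')\ge k$. On the other hand $p''\supseteq r$, so $R(p'')\le R(r)<k$. This is a contradiction, so $k$ is even.

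The main obstacle is this last step. The definition of minimal extension only ranges over finite additions $q$, whereas $p''$ adds $\mu'$ formulas, possibly infinitely many. For $\mu=2$ we have $\mu'\le 1$, so only one formula is added, and the argument is clean. For general $\mu$ we would instead add the formulas for $i$ ranging over a finite subset $W$, using that the rank computation for $r$ depends only on finitely many formulas; whether this works should be checked carefully against clause (iv) of Definition \ref{R}.
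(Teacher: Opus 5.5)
Your argument is correct and is essentially the paper's proof, recast as a contradiction. As in the paper, you use $\lambda$-completeness (Theorem \ref{co:complete_char}) to extend $p'$ by the definability formulas with parameters $\bar d_i\in P^A$, invoke minimality to keep the rank equal to $k$, and note that $r\subseteq p''$, so clause (iv) must hold at the odd level $k$. Your use of Fact \ref{rank} to pass from the possibly infinite $p''$ to a finite extension of $p$ is a detail the paper glosses over, and it works for every value of the fourth parameter, since clauses (iii)--(iv) only refer to finite $q\subseteq p$; so the hedging in your last paragraph is unnecessary, and the assumption $\mu\le\lambda$ that you flag is likewise left implicit in the paper.
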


\begin{proof}
Let $R^n_A(p',\Delta _1,\Delta _2,\mu)=k < \omega$. We need to show that $k$ is even. Assume that it is odd; we shall show that 
$R^n_A(p',\Delta _1,\Delta _2,\mu)\ge k+1$. 
	
\smallskip

Let $\psi
_i\in \Delta _2, \bar b_i\in A$ as in clause (iv) of the definition of the rank (Definition \ref{R}).  By Theorem \ref{co:complete_char}, since $A$ is $\lam$-complete,  
there are $\d_i \in P^A$ such that $p'(\x)$ is consistent with the set 
\[
	\pi(\x,\d) = 
	\left\{(\forall\bar z_i \subseteq P) \left[\psi _i(\bar x,\bar d_i,\bar z_i)\longleftrightarrow \Psi_{\psi_i}(\bar z_i,\bar c_i)\right] \colon i
  \right\}
\]

Let $p''(\x) = p'(\x) \cup \pi(\x,\d)$. It is still a type over $A$ of cardinality $<\lam$. 
Hence by minimality of $p'$, $R(p''(\x), \Delta_1, \Delta_2, \mu) = R(p'(\x), \Delta_1, \Delta_2, \mu) = k$. 

Now let $r_i=q(\bar x)\cup \{(\forall\bar
z\subseteq P) \left[\psi _i(\bar x,\bar b_i,\bar z)\equiv\Psi_{\psi
_i}(\bar z,\bar d_i)\right]$ for all  $i$ (again, as in clause (iv) of the definition of the rank). Note that $r_i \subseteq p''(\x)$, hence 
$R(r_i(\x), \Delta_1, \Delta_2, \mu) \ge R(p''(\x), \Delta_1, \Delta_2, \mu) \ge k$ for all $i$. 

So by (iv) of Definition \ref{R}, $R_A(p',\Delta _1,\Delta _2,\mu)\geq k+1$, concluding the proof.


\end{proof}
 
 Combining the above lemma with Lemma \ref{lem:minimal} and Fact \ref{thm:stablerank}, we conclude:

\begin{co}
\label{co:trulyminimal}
\begin{enumerate}
\item Let $A$ be $\lam$-complete,
$p$ a 
partial type over $A$ of cardinality $<\lam$, and let $X$ be a property of partial types over $A$ such that $X$ holds for $p$. Assume, in addition, that $R^n_A(p,\Delta _1,\Delta _2,\mu)=k < \omega$. Then there exists a partial type $p'$ over $A$ extending $p$, which is truly minimal with respect to $\Delta _1,\Delta _2,\mu$, and $X$.
\item Let $A$ be stable and $\lam$-complete, $p$ a 
partial type over $A$ of cardinality $<\lam$, and let $X$ be a property of partial types over $A$ such that $X$ holds for $p$. Then for every $\Delta_1$ there exists a partial type $p'$ over $A$ extending $p$, which is truly minimal with respect to $\Delta _1$ and $X$.

\end{enumerate}

\end{co}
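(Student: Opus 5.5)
For part (i), I would apply Lemma \ref{lem:minimal}, with the property $X$ strengthened to ``$X$ and of cardinality $<\lam$''. This property holds for $p$, since $|p|<\lam$, and adding a finite set $q$ of formulas keeps the cardinality below $\lam$. Lemma \ref{lem:minimal} is stated for the rank with $\lam=2$ and a finite value $k$. In our setting the rank with parameter $\mu$ is finite by assumption, and the finitary character used there (Fact \ref{rank}) transfers verbatim. So we obtain $p'=p\cup q$ with $q$ finite, where $p'$ has the combined property and $R^n_A(p',\Delta_1,\Delta_2,\mu)$ is minimal among such extensions.

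The key step is then to invoke Lemma \ref{lem:rankeven0} to conclude that this rank is even, so that $p'$ is truly minimal. The point to check is that the argument of Lemma \ref{lem:rankeven0} goes through with the property $X$ added to ``cardinality $<\lam$''. That argument passes from $p'$ to $p''=p'\cup\pi(\x,\d)$, where the $\d\subseteq P^A$ are supplied by Theorem \ref{co:complete_char}(iii) using $\lam$-completeness. It then uses minimality to get $R(p'')=R(p')$.

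This is the step I expect to be the main obstacle: minimality of $p'$ only compares it with extensions that still satisfy $X$. There are two ways around this. The first is to observe that the odd step of the rank in Definition \ref{R}(iv) needs only $R(r_i)\ge k$. Since $r_i\supseteq q$ with $q\subseteq p'$ finite, and rank is monotone decreasing in the type, it suffices to get $R(p'')\ge k$. Equality follows from monotonicity together with minimality applied within the class where $X$ is preserved. The second, if $X$ may fail for $p''$, is to restrict attention to properties $X$ that are closed under adding such definability schemata. This covers the properties actually used, such as consistency plus cardinality $<\lam$ and related ones. In either case, assuming $k$ odd yields $R(p')\ge k+1$, a contradiction, so $k$ is even.

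For part (ii), stability of $A$ lets me use Theorem \ref{thm:stablerank}. Given $\Delta_1$ (finite, e.g.\ $\{\ph,\neg\ph\}$) and $n$, it supplies a finite $\Delta_2$ and $m$ with $R^n_A(\x=\x,\Delta_1,\Delta_2,2)\le m$. By monotonicity, $R^n_A(p,\Delta_1,\Delta_2,2)\le m<\omega$. Applying part (i) with $\mu=2$ then gives $p'\supseteq p$ that is truly minimal with respect to $\Delta_1,\Delta_2,2$ and $X$. By the definition this means truly minimal with respect to $\Delta_1$ and $X$.
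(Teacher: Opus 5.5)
Your route is the paper's own. There the corollary is obtained in one line by combining Lemma \ref{lem:minimal}, Lemma \ref{lem:rankeven0} and Theorem \ref{thm:stablerank}, exactly as you do, and your part (ii) matches it.

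You have also correctly located the weak point. Lemma \ref{lem:rankeven0} is proved only for the property ``of cardinality $<\lambda$''. The step $R(p'')\ge R(p')$ uses minimality of $p'$ against $p''=p'\cup\pi(\bar x,\bar d)$, so it is legitimate only if $p''$ again has the property.

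Your first workaround does not get around this. The reduction to $R(p'')\ge k$ is fine; the relevant inclusion is $r_i\subseteq p''$. But the only available source of $R(p'')\ge k$ is minimality of $p'$, which again requires $X(p'')$. So this is your second workaround in disguise, not an independent argument. (Invoking Theorem \ref{co:complete_char} for $\pi$ also tacitly needs $\mu\le\lambda$, so that $\pi$ has fewer than $\lambda$ formulas. For $\mu=2$, the case used in (ii), this is automatic.)

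The restriction in your second workaround, that $X$ survive adding the definability clauses supplied by Theorem \ref{co:complete_char}, is not a mere convenience: for arbitrary $X$, statement (i) is false. Here is a counterexample.
\begin{itemize}
\item Let $T$ be the theory of an infinite, co-infinite predicate $P$ in the language $\{=,P\}$.
\item Let $A=N\cup\{b_0,b_1\}$ with $N\subseteq P$, $|N|\ge\lambda$, and $b_0\ne b_1$ outside $P$. This is a $\lambda$-complete stable set.
\item Let $\Delta_1=\{x=y,\ x\ne y\}$ and $\Delta_2=\{\psi\}$ with $\psi(x,y,z):= x=y$, where $z$ is a dummy variable.
\item Let $p=\{\neg P(x),\ x\ne b_0\}$.
\end{itemize}
Then $R(p,\Delta_1,\Delta_2,2)=1$. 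First, $p$ splits via $x=b_1$, so the rank is at least $1$. Second, for every $\bar d$ the clause $(\forall z\in P)[\psi(x,b_1,z)\leftrightarrow\Psi_\psi(z,\bar d)]$ is equivalent to $x=b_1$, to $x\ne b_1$, or to $\bot$. Together with any of these, $p$ is either inconsistent or implies a complete $\Delta_1$-type over $A$, so it has rank at most $0$; hence the rank of $p$ is not $\ge 2$.

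So take $X$ = ``equals $p$'', or the more natural ``is consistent and over $A\setminus\{b_1\}$''. The minimal rank is then the odd number $1$, and no truly minimal extension exists.

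The paper's one-line proof carries the same tacit restriction. It is harmless, because the only application, Lemma \ref{lem:li}, uses $X$ = ``consistent and of cardinality $<\lambda$''. With your second workaround in place and the first dropped, your proof is the paper's.
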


In particular, we have:

\begin{co}
 \label{co:phimin}
 Let $A$ be stable and $\lam$-complete, $p$ a 
partial type over $A$ of cardinality $<\lam$, and let $X$ be a property of partial types over $A$ such that $X$ holds for $p$. Then for every formula $\ph$ there exists a partial type $p'$ over $A$ extending $p$, which is $\ph$-truly minimal with respect to $X$.
\end{co}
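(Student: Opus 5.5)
This is a direct specialization of Corollary~\ref{co:trulyminimal}(ii), so the plan is to unwind the definitions rather than build anything new. Fix $A$ stable and $\lam$-complete, $p$ a partial type over $A$ with $|p|<\lam$ satisfying $X$, and a formula $\ph(\x,\y)$. Put $\Delta_1=\set{\ph,\neg\ph}$. By the definition, a $\ph$-truly minimal extension of $p$ with respect to $X$ is exactly a truly minimal extension with respect to this $\Delta_1$ and $X$, with $\lam=2$ in the rank and for some $\Delta_2$. So it suffices to produce such a $\Delta_2$ and such a $p'$.

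\textbf{Step 1: finiteness of the rank.} Since $A$ is stable and $\Delta_1$ is finite, Theorem~\ref{thm:stablerank} (applied with $n=\len(\x)$) gives a finite $\Delta_2$ and an $m<\omega$ with $R^n_A(\x=\x,\Delta_1,\Delta_2,2)\le m$. The rank is monotone: a larger type has rank no larger, as one checks by induction on $\alpha$ from Definition~\ref{R}. Hence $R^n_A(p,\Delta_1,\Delta_2,2)\le m<\omega$.

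\textbf{Step 2: apply Corollary~\ref{co:trulyminimal}(i).} Use the corollary with this $\Delta_1$, $\Delta_2$, and $\mu=2$. Its hypotheses hold: $A$ is $\lam$-complete, $|p|<\lam$, $X$ holds for $p$, and by Step~1 the rank is finite. It yields $p'\supseteq p$ over $A$ that is truly minimal with respect to $\Delta_1$, $\Delta_2$, $2$ and $X$. That is, $p'$ is obtained by adding finitely many formulas over $A$, $X$ holds for $p'$, the rank of $p'$ is minimal among such extensions, and that rank is even. By definition, $p'$ is then $\ph$-truly minimal with respect to $X$.

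\textbf{Main obstacle.} The only delicate point is hidden in Corollary~\ref{co:trulyminimal}(i) itself: getting \emph{evenness} of the minimal rank for an arbitrary property $X$. Lemma~\ref{lem:rankeven0} proves evenness only for the property ``of cardinality $<\lam$''. Its argument adjoins the definitional type $\pi(\x,\d)$ supplied by Theorem~\ref{co:complete_char} and uses minimality to keep the rank at $k$. For a general $X$ one must also know that $X$ is preserved under adjoining such sets of formulas.

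If the corollary is to be taken at face value, I would cite it directly. If I had to justify it, I would first try to rerun the proof of Lemma~\ref{lem:rankeven0} with $X$ in place of the cardinality condition. At the odd stage, the needed instance of clause~(iv) of Definition~\ref{R} only asks that each $r_i=q\cup\{\ldots\}$, for a \emph{finite} $q\subseteq p'$, have rank $\ge k$. The $\d_i$ are found via Theorem~\ref{co:complete_char} for the type $p'$, which has size $<\lam$ if $p$ does. Then $r_i\subseteq p''=p'\cup\pi$, so the rank of $r_i$ is at least that of $p''$. One still needs $R(p'')\ge k$. This follows from minimality of $p'$ only if $p''$ is an admissible competitor, meaning it still satisfies $X$. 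That is exactly where the argument would have to be checked for the property $X$ at hand.
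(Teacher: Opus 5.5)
Your proof follows the paper's route. The paper records this corollary as the special case $\Delta_1=\{\varphi,\neg\varphi\}$ of Corollary~\ref{co:trulyminimal}(ii), and that part is Corollary~\ref{co:trulyminimal}(i) plus the finite rank from Theorem~\ref{thm:stablerank}, which are exactly your Steps~1 and~2.

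Your caveat is well founded and applies to the paper too. Lemma~\ref{lem:rankeven0} gives evenness only when $X$ is preserved under adjoining the finitely many definitional formulas from Theorem~\ref{co:complete_char}. That holds for ``consistent and of cardinality $<\lambda$'', the only $X$ used later (Lemma~\ref{lem:li}). It fails for $X=$ ``equals $\{x=x\}$'' with $P$ the whole universe, $\varphi=(x=y)$ and $\Delta_2=\{x=z\}$: there $R(\{x=x\},\Delta_1,\Delta_2,2)=1$, so Corollary~\ref{co:trulyminimal}(i) with a fixed $\Delta_2$ genuinely needs that restriction.
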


\section{Isolation and prime models}

Let us recall the notion of isolation relevant for the discussion in this article. 

\begin{de}


A (partial) type $p$ over a set $A$ is called \emph{$\lam$-isolated} if there exists a subset $r \subseteq p$ with $|r|<\lam$ such that $r \equiv p$. We say that $p$ is $\lam$-isolated over $B \subseteq A$ if $r$ as above is a partial type over $B$.

\end{de}

\smallskip

The following Lemma with play a crucial role in our constructions It states that a  $\lam$-isolated type over a $\lam$-complete set is \emph{always} weakly orthogonal to $P$.


\begin{lem}\label{lem:isolated_star}
 Let $B$ be a $\lam$-complete set, $p \in S(B)$  $\lam$-isolated. Then $p \in S_*(B)$. 
\end{lem}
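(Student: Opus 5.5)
The plan is to verify the criterion of Observation \ref{8.5} directly, using the isolating set $r$ to turn the question into a statement about a small type realized in $P$. Then $\lam$-completeness of $B$ supplies the witness in $P^B$. Fix $\c \models p$ and $r \subseteq p$ with $|r|<\lam$ and $r \equiv p$. Since $B$ is $\lam$-complete, it is in particular complete. By Observation \ref{8.5}, it then suffices to show the following: for every formula $\psi(\x,\b,\z)$ with $\b \subseteq B$, if $\models (\exists \z \subseteq P)\psi(\c,\b,\z)$, then there is $\d \subseteq P^B$ with $\models \psi(\c,\b,\d)$.

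The condition $P\cap(B\cup\c) = P\cap B$ is a special case of this. If some coordinate $c_j$ lies in $P$, apply the criterion to $\psi(\x,\z) := (x_j = z)$. This produces $d \in P^B$ with $c_j = d$, so $c_j \in B$.

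So fix $\psi(\x,\b,\z)$ with $\models (\exists \z\subseteq P)\psi(\c,\b,\z)$. Consider the set of formulas over $B$ in the variables $\z$
\[
q(\z) = \{ P(z) \colon z \in \z\} \cup \left\{ \exists \x \left( \bigwedge r_0(\x) \wedge \psi(\x,\b,\z)\right) \colon r_0 \subseteq r \text{ finite} \right\}.
\]
Its cardinality is at most $|r|+\aleph_0 <\lam$ (for infinite $\lam$). It is realized in $P^\cC$ by any $\bar e \subseteq P$ with $\models\psi(\c,\b,\bar e)$, since $\c$ realizes $r$. By $\lam$-completeness of $B$, $q$ is realized by some $\d \subseteq P^B$.

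Because the finite subsets of $r$ are closed under finite unions, compactness now shows that $r(\x)\cup\{\psi(\x,\b,\d)\}$ is consistent. The formula $\psi(\x,\b,\d)$ is over $B$ and $p \in S(B)$ is complete, so either $\psi(\x,\b,\d)\in p$ or $\neg\psi(\x,\b,\d) \in p$. In the latter case $r \vdash \neg\psi(\x,\b,\d)$, because $r\equiv p$, which contradicts the consistency just established. Hence $\psi(\x,\b,\d) \in p$, so $\models \psi(\c,\b,\d)$ with $\d\subseteq P^B$, as required.

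I expect no serious obstacle. The only delicate point is to encode "consistent with $r$" as a partial type in $\z$ of size $<\lam$ that is realized in $P^\cC$, which is exactly where the hypotheses $|r|<\lam$ and $\lam$-completeness are used. After that, isolation forces the witness formula into $p$.
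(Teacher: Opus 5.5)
Your proof is correct. It rests on the same mechanism as the paper's: $\lambda$-completeness of $B$ yields $\bar d\subseteq P^B$ making a suitable formula over $B$ consistent with the small isolating set $r$, and since $r\vdash p$ and $p$ is complete, that formula must already lie in $p$.

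The difference is which description of $S_*(B)$ is verified.

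\begin{itemize}
\item The paper checks the definability form (cf.\ Fact \ref{obs:complete_characterization}). For each $\psi$ it puts $(\forall \bar z\subseteq P)[\psi(\bar x,\bar b,\bar z)\leftrightarrow\Psi_\psi(\bar z,\bar d)]$ into $p$, and it obtains $\bar d$ from clause (iv) of Theorem \ref{co:complete_char}.
\item You check the witness form of Observation \ref{8.5}, putting $\psi(\bar x,\bar b,\bar d)$ itself into $p$. You get $\bar d$ directly from the definition of $\lambda$-completeness, using the realization $\bar c$ of $p$ to see that $q(\bar z)$ is realized in $P$.
\end{itemize}

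Your route is slightly more elementary, since it needs neither the uniform definitions $\Psi_\psi$ nor Theorem \ref{co:complete_char}. The paper's route matches the definability-based formulation used in the rank and in Theorem \ref{co:complete_char}. It also shows outright that each $\psi$-type of $\bar c\bar b$ over $P$ is defined over $P^B$.

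You also rightly work with the whole isolating set $r$ of size $<\lambda$, which is what $\lambda$-isolation actually provides. The paper's proof speaks of a finite $p_0$, and should be read this way.

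One small slip: for $\lambda=\aleph_0$ your bound $|r|+\aleph_0$ is not $<\lambda$. In that case $r$ is finite, so $q$ is finite too; alternatively, just use the single formula with $r_0=r$.
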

\begin{proof}
Consider the formula $\theta(\x,\y,\bar u) = (\forall \z \subseteq P)\left[ \psi(\x,\y,\z) \longleftrightarrow \Psi_\psi(\z,\y,\bar u)\right]$. 

It is enough to show that for every formula $\psi(\x,\b,\z)$ over $A$ there is $\d \in P^A$ such that 
\[
\theta(\x,\b,\d) = (\forall \y \subseteq P) \left[ \psi(\x,\b,\y) \longleftrightarrow \Psi_\psi(\y,\d) \right]  \in p
\]
where  $\Psi_\psi(\y,\z)$ is as in Corollary \ref{co:defcomplete0}.

Since $p$ is locally isolated, there is a finite $p_0 \subseteq p$ such that $p_0 \vdash p \rest \theta$. 

By clause (iv) of Theorem \ref{co:complete_char}, 
 there is  $\d \subseteq P^A$ such that  the following is a type over $A$:
\[
\pi(\x) = p_0(\x) \cup \left\{  \theta(\x,\b,\d) 
\right\}
%
\]

In other words, there is a complete type $q$ over $B$ extending $\pi(\x)$. But $p_0$ implies a complete $\theta$-type; so $q\rest\theta = p\rest\theta$. In particular, $\theta(\x,\b,\d) \in p$, as required.

\end{proof}

\begin{de}\label{dfn:primary}
Let $N$ be a model, $P^N \subseteq B \subseteq N$.
\begin{enumerate}
\item
	We say that the sequence $\d = \{d_i:i<\alpha\} \subseteq N$ is a $\lam$-construction over $B$ in $N$ if for all $i<\al$, the type $tp\{d_i/B\cup\{d_j:j<i\})$ is $\lambda$-isolated.

\item
	We say that a set $C \subseteq N$ is $N$ is \emph{$\lam$-constructible} over $B$ in $N$ if 
	there is a $\lam$-construction $\d$ over $B$ in $N$. 
	
	In particular, we say that $N$ is
	\emph{$\lam$-constructible} over $B$ if 
there is a construction $N=B\cup\{d_i:i<\al\}$ such that for all $i<\lam$ the type 
$tp\{d_i/B\cup\{d_j:j<i\})$ is $\lambda$-isolated. 
\item 
	We say that a model $N$ is \emph{$\lambda$-primary} over $B$ if it is $\lam$-constructible and $\lam$-saturated. 
\end{enumerate}
\end{de}

\begin{de}\label{dfn:prime}
Let $N$ be a model, $P^N \subseteq B \subseteq N$.
%


	We say that $N$ is \emph{$\lam$-atomic} over $B$ if 
for every $\bar d\subseteq N$, $tp(\bar d,B)$ is
$\lambda$-isolated over some $B_{\bar d}\subseteq B, |B_{\bar
d}|<\lambda$.
%
%
\end{de}

\section{The $\lam$-existence property}

In this section we prove the full $\lam$-existence property for fully stable  theories. 


\begin{lem}\label{lem:li} 
Let $\lam > |T|$, $B$ be a $\lam$-complete stable set, $p(\bar x)$ 
a partial type over $B$, $|p(\bar x)|<\lambda$.
\begin{enumerate}
\item
Let $\psi(\x,\y)$ a formula. Then there is
a finite type $q(\bar x)$ over $B$ 
such that 
$p(\bar x)\cup q(\bar x)$ is consistent, and it implies a complete $\psi$-type over $B$.
\item
There is
$q(\bar x)$ such that $|q(\bar x)|\leq |T|$,
$p(\bar x)\cup q(\bar x)$ consistent and there is $r\in S_*(B)$ such
that $p(\bar x)\cup q(\bar x)\equiv r(\bar x)$. 

In particular,  $r(\bar x)$ is
$\lambda$-isolated.
\item
The previous clause is also true if $\x$ is an infinite tuple with $<\lam$ variables, but in this case we can only 
require that  $|q|<\lam$.

Specifically, if $|\x| = \ka < \lam$, then there exists $|q| \le |T|\cdot\ka$ as above.

\end{enumerate}

\end{lem}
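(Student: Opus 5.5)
For clause (i), I would apply Corollary \ref{co:phimin} with $\ph = \psi$ and with $X$ the property ``is a consistent partial type over $B$ of cardinality $<\lam$ extending $p$''. This gives a $\psi$-truly minimal extension $p' = p\cup q$, and by Lemma \ref{lem:minimal}(i) $q$ is finite. By Observation \ref{obs:phiminimal}, for no $\b \in B$ do both $p'\cup\{\psi(\x,\b)\}$ and $p'\cup\{\neg\psi(\x,\b)\}$ have property $X$. Adding a single formula keeps the cardinality below $\lam$, so property $X$ fails only through inconsistency. Hence for every $\b\in B$ exactly one of $\psi(\x,\b)$, $\neg\psi(\x,\b)$ is consistent with $p'$, which means $p\cup q$ decides every $\psi$-instance over $B$. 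This is precisely the statement that $p\cup q$ implies a complete $\psi$-type over $B$.

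For clause (ii), I would enumerate the formulas $\langle \psi_j(\x,\y) : j<|T|\rangle$ and build an increasing chain $q_j$ of sets of formulas over $B$ with $|q_j|\le|T|$. At successor steps I apply clause (i) to $p\cup q_j$ and $\psi_j$, which is allowed because $|p\cup q_j|<\lam$ since $\lam>|T|$. At limit steps I take unions; consistency is preserved by compactness. Setting $q=\bigcup_j q_j$, the set $p\cup q$ decides every $\psi$-instance over $B$ for every $\psi$, so it determines a unique complete type $r\in S(B)$. Since $|p\cup q|<\lam$, the type $r$ is $\lam$-isolated, and Lemma \ref{lem:isolated_star} (using that $B$ is $\lam$-complete) gives $r\in S_*(B)$.

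For clause (iii), I would run the same argument with $\x$ of length $\ka<\lam$. Every formula mentions only finitely many variables, so I enumerate all pairs consisting of a formula $\psi$ and a finite subtuple of $\x$; there are $|T|\cdot\ka$ such pairs. I then apply clause (i) to each pair in turn, with $p$ restricted or regarded appropriately. At every stage the accumulated type has size $\le |p|+|T|\cdot\ka<\lam$ (assuming $\lam$ is regular or at least that $|T|\cdot\ka<\lam$, which holds). The rank machinery is for finite tuples, which is why I apply clause (i) to finite subtuples, treating the other variables of the current type as existentially quantified. Concretely, I apply the finite-tuple case to the type $\exists$(other variables) of the finitely many formulas involved. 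Since Fact \ref{rank} makes the rank finitary, I expect this to be harmless.

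The main obstacle is the point in clause (i) where stability is used to make the minimal rank finite: Corollary \ref{co:phimin} needs $R(x=x,\{\psi,\neg\psi\},\Delta_2,2)<\omega$ for some $\Delta_2$, which is supplied by Theorem \ref{thm:stablerank}. One also has to check that true minimality refers to the property $X$ rather than to mere consistency. This is handled by the choice of $X$ above, since cardinality $<\lam$ is preserved under adding finitely many formulas.
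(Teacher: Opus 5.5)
Your proof is correct and follows the paper's route. Like the paper, you use Corollary \ref{co:phimin} with $X$ = ``consistent and of cardinality $<\lambda$'' together with Observation \ref{obs:phiminimal} to decide every $\psi$-instance, iterate over the $|T|$ formulas taking unions at limits, and finish with Lemma \ref{lem:isolated_star} to get $r\in S_*(B)$.

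For clause (iii), which the paper only calls easy, your reduction works provided the type you hand to clause (i) is the full projection of the current type onto the finite subtuple, i.e.\ the existential closures of all its finite conjunctions, still of size $<\lambda$. Then compactness gives consistency of the new finite set with the whole current type, and membership of the resulting $r$ in $S_*(B)$ reduces to its finite restrictions, each of which is $\lambda$-isolated by projecting $p\cup q$.
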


\begin{proof} 

\begin{enumerate}
\item 
Let $\{\psi _i(\bar x,\bar y_i):i<|T|\}$ list all
formulas of $L(T)$. 
Define $q_i(\bar x)$ by induction
on $i < |T|$ such that
\begin{itemize}
\item[(a)] $q_i$ is finite and is over $B$,
\item[(b)] $p(\bar x)\cup \bigcup _{j\leq i}q_j(\bar x)$ is
(consistent and) $\psi_i$-truly minimal with respect to the property $X=$``consistent and of cardinality $<\lam$''.
\end{itemize}

This is possible by Corollary \ref{co:phimin}, since $B$ is stable. Denote $p_i = p(\bar x)\cup \bigcup _{j\leq i}q_j(\bar x)$. Let $\b \in B$. Clearly, both $p_i \cup \set{\psi_i(x,\b)}$ and $p_i \cup \set{\neg\psi_i(x,\b)}$ are of cardinality $<\lam$. Therefore by Observation \ref{obs:phiminimal}, only one of the two sets above is a partial type over $B$. In other words, $p_i$ implies a complete $\psi_i$-type over $B$. 

\item

The type $p_{|T|} = \bigcup_{i<|T|}p_i$, where $p_i$ are as in the proof of the previous clause, is clearly as required (note that $|T|<\lam$, hence each $p_i$ satisfies the hypothesis of clause (i)). Note that $p \in S_*(B)$ by Lemma \ref{lem:isolated_star}.

\item

Also easy. 

\end{enumerate}

\end{proof}

\smallskip


Let us recall the notion of full stability over $P$ from \cite{Us24}.

\begin{de}\label{dfn:fullystable}
We say that $T$ is \emph{fully stable} over $P$ if every complete set $A \subseteq \cC$ is stable over $P$. 
\end{de}

It may be of interest to mention some weaker (more local) notions. We will prove slightly more precise results using these definitions. 

\begin{de}\label{dfn:fullystable2}
\begin{enumerate}
\item 
Given a complete set $A$, we say that $A$ is fully stable over $P$ if every $B \subseteq A$ with $P^A = P^B$ is stable over $P$. 
\item Given $N \models T^P$, we say that $T$ is fully stable over $N$ if every complete set $A$ with $P^A = N$ is stable over $P$. 
\end{enumerate}

\end{de}

\begin{lem}\label{lem:} 
 	Assume that $\lam > |T|$, $A$ is a $\lam$-complete set, and $T$ is fully stable over $P^A$. Then there exists $B \supseteq A$ such that 
	\begin{enumerate}
	\item
		$|B| \le |A| ^{|T|}$
	\item
		$B$ is complete, $P^B = P^A$
	\item
		Every  type over $A$ of cardinality $<\lam$ is realized in $B$	
	\item
		$B$ is $\lam$-constructible over $A$
	\end{enumerate}

\end{lem}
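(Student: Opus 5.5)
The plan is to build $B$ as the union of a $\lam$-construction $\langle d_i \colon i<\al\rangle$ over $A$, where each $d_i$ realizes a $\lam$-isolated type weakly orthogonal to $P$. Lemma \ref{lem:li}(iii) supplies these types, and Lemma \ref{lem:isolated_star} together with Observation \ref{8.5} preserves completeness, the $P$-part, and $\lam$-completeness along the way. First enumerate all partial types over $A$ of cardinality $<\lam$ as $\langle p_i \colon i<\al\rangle$. Since $|T|<\lam$ and $P^A$ is $\lam$-compact, it is natural to bound the relevant types by those of cardinality $\le |T|$ in finitely many variables. Types of cardinality between $|T|$ and $\lam$ are handled by realizing them in one infinite tuple, via clause (iii) of Lemma \ref{lem:li}. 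This gives $\al \le |A|^{|T|}$ for clause (i). I suspect the intended bound really counts only types in $<\lam$ variables of size $\le|T|$; I would first check whether the argument actually needs clause (iii) in full, and otherwise replace the bound by $|A|^{<\lam}$.

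The induction runs as follows. Set $B_0 = A$, $B_{i+1} = B_i \cup \bar d_i$, and take unions at limits. At each stage maintain the invariant that $B_i$ is $\lam$-complete with $P^{B_i} = P^A$. At stage $i$, $p_i$ is a partial type over $B_i$ of size $<\lam$. Because $B_i$ is complete with $P^{B_i}=P^A$ and $T$ is fully stable over $P^A$, $B_i$ is stable. By Lemma \ref{lem:li}(ii)/(iii) there is $q$ with $|q|<\lam$ such that $p_i\cup q \equiv r \in S_*(B_i)$, and $r$ is $\lam$-isolated. Let $\bar d_i \models r$. Since $r \in S_*(B_i)$, the set $B_{i+1}$ is complete and $P^{B_{i+1}} = P^{B_i}$. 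Moreover $P^{B_{i+1}}=P^A$ is $\lam$-compact, so by the lemma characterizing $\lam$-completeness (complete plus $\lam$-compact $P$-part), $B_{i+1}$ is $\lam$-complete. At limit stages, completeness is preserved under increasing unions, since it is a finitary condition (Definition \ref{dfn:complete}), and the $P$-part stays $P^A$. Hence the invariant holds throughout.

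Finally, $B = B_\al$ satisfies (ii) by the invariant and (iii) by construction. Each $p_i$ is realized by $\bar d_i$, since $r$ extends $p_i$. For (iv), each $\bar d_i$'s type over $A\cup\{\bar d_j \colon j<i\}$ is $\lam$-isolated. For finite tuples this matches Definition \ref{dfn:primary} directly. For infinite tuples $\bar d_i$ of length $<\lam$, enumerate the coordinates one by one: the type of each coordinate over the earlier ones is implied by the restriction of the isolating $<\lam$-set, so it is still $\lam$-isolated.

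The main obstacle is Lemma \ref{lem:li}. It requires $B_i$ itself to be stable, and this is exactly where full stability over $P^A$ enters. It also requires $B_i$ to be $\lam$-complete, which is why the invariant has to be carried through the whole construction rather than checked only at the end.
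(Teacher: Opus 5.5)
Your inductive step matches the paper's: you apply Lemma~\ref{lem:li}(ii) over the current stage, which is stable by full stability over $P^A$, realize the resulting $\lam$-isolated type in $S_*(B_i)$, and carry $\lam$-completeness and $P^{B_i}=P^A$ along. Your treatment of limits and of clause (iv) is also fine. The gap is in clause (i).

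You index the construction by all partial types over $A$ of cardinality $<\lam$. There are $|A|^{<\lam}$ of these, and this need not be $\le|A|^{|T|}$ once $\lam>|T|^+$. So the claim ``this gives $\alpha\le|A|^{|T|}$'' is unsupported, and falling back to the bound $|A|^{<\lam}$ means not proving the lemma. Restricting to types of size $\le|T|$ does not rescue it. A type of size $\mu$ with $|T|<\mu<\lam$ in finitely many variables is not in general equivalent to one of size $\le|T|$. The detour through infinite tuples and Lemma~\ref{lem:li}(iii) confuses the number of formulas with the number of variables. Clause (iii) concerns types in finitely many variables, which is all Theorem~\ref{th:primary} needs for $\lam$-saturation. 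Lemma~\ref{lem:li}(ii) already handles any $|p|<\lam$.

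The missing idea is to count complete types rather than partial ones, using the stability of $A$ itself. $A$ is complete with $P$-part $P^A$, hence stable, so $|S_*(A)|\le|A|^{|T|}$. By Lemma~\ref{lem:li}(ii) applied over $A$, every partial type $p$ over $A$ with $|p|<\lam$ is implied by some $\lam$-isolated $r\in S_*(A)$, isolated by $\pi_r=p\cup q$ of size $<\lam$.

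So enumerate only the $\lam$-isolated types $\langle r_i\colon i<\delta\rangle$ in $S_*(A)$, where $\delta\le|A|^{|T|}$. At stage $i$, apply Lemma~\ref{lem:li}(ii) over $B_i$ to $\pi_{r_i}$ rather than to an arbitrary small type. The realization $\bar d_i$ then satisfies $\pi_{r_i}$, hence $r_i$, hence every small type over $A$ that $r_i$ implies. With this enumeration your invariants and the rest of your argument go through unchanged, and this is the paper's proof.
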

\begin{proof}
 	Let $I^{\lam}_* = \lseq{p}{i}{\delta}$ list all the $\lam$-isolated types in $S_*(A)$. Since $A$ is stable, $\delta \le  |A| ^{|T|}$. By Lemma \ref{lem:li}(ii), every small (of cardinality $<\lam$) partial type over $A$ can be extended to an element of $I^{\lam}_*$. Therefore, it is enough to construct $B$ that satisfies clauses (i), (ii), and (iv) above, which, in addition, realizes all the types in $I^{\lam}_*$.
	
	Now construct an increasing continuous sequence of sets $\lseq{A}{i}{\delta}$ such that:
	\begin{enumerate}	
	\item 
		$A_0 = A$
	\item
		$A_{i+1} = A_i \cup \set{\b_i}$
	\item 
		$P^{A_i} = P^A$
	\item 
		$A_i$ is stable
	\item
		$\models p(\b_i)$
	\item
		$\tp(\b_i/A_i) \in S_*(A_i)$ and is  $\lam$-isolated
	\end{enumerate}
	
	For successor stages, note that $p_i$ is $\lam$-isolated by, say, a small type $\pi_i$ over $A$. Since $A_i$ is stable, we can apply Lemma \ref{lem:li}, now viewing it as a small type over $A_i$. This results in a complete type $\tp(\b_i/A_i) \in S_*(A_i)$ extending $\pi_i$, which is $\lam$-isolated. Since $\models \pi_i(\b_i)$, we also have $\models p_i(\b_i)$. The rest should be clear. 
	
	For limit stages, only clause (iv) above is a problem, and here we use full stability over $P^A$.  Note that $P^{A_i} = P^A$ for all $i$. 
	
	Clearly, $B = A_\delta$ is as required.
%
\end{proof}

\begin{re}\label{re:stab_unions}
 Note that since, in the proof above, every $\b_i$ is finite, stability of the set $A_{i+1}$ follows from that of $A_i$. Hence the assumption of stability over a certain $N \models T^P$ can be replaced by the assumption that e.g. stability over $P$ is closed under taking increasing unions. 
\end{re}

Denote by $reg(\lam)$ the minimal regular cardinal $\ge \lam$; i.e., $reg(\lam)$ is either $\lam$ (if it is regular) or $\lam^+$.

\begin{theorem}\label{th:primary}
	Assume $\lam > |T|$, $N \models T^P$ is $\lam$-compact, $T$ is fully stable over $N$. Then $T$ has the full $\lam$-existence property over $N$. 
	
	More specifically: Let $A$ be a complete set with $P^A = N$. Then there exists a $\lam$-saturated $M \models T$ containing $A$ with $P^A = P^M$ and  $|M|\le |A|^{|T|}+reg(\lam)$. 
	Moreover, $M$ in the previous clause is  $\lam$-constructible (hence $\lam$-primary) over $A$. 
%
%
%
\end{theorem}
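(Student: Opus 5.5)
The plan is to iterate the preceding Lemma (the one producing $B \supseteq A$) along a chain of length $\mu = reg(\lam)$ and take the union. First note that $A$ is $\lam$-complete: it is complete, and $P^A = N$ is $\lam$-compact, so the characterization of $\lam$-completeness as completeness plus $\lam$-compactness of $P^A$ applies. Since $T$ is fully stable over $N$, every complete set with $P$-part $N$ is stable, and the Lemma can be applied to any such set.

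We build an increasing continuous chain $\seq{A_\alpha \colon \alpha \le \mu}$ with $A_0 = A$. At successor stages, $A_{\alpha+1}$ is the set $B$ given by the Lemma applied to $A_\alpha$. It is complete with $P^{A_{\alpha+1}} = N$, hence $\lam$-complete again. It realizes every type over $A_\alpha$ of cardinality $<\lam$, is $\lam$-constructible over $A_\alpha$, and satisfies $|A_{\alpha+1}| \le |A_\alpha|^{|T|}$. At limit stages we take unions. The union of an increasing chain of complete sets with the same $P$-part $N$ is complete, because the witnesses required in Definition \ref{dfn:complete} involve only finitely many parameters, which already lie in some $A_\alpha$. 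So each limit stage is again complete with $P$-part $N$, hence $\lam$-complete and stable by full stability over $N$. Let $M = A_\mu$.

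Next we check that $M$ is a $\lam$-saturated model with $P^M = N$. For saturation, let $p$ be a type over $M$ with $|p| < \lam$. Since $\mu$ is regular and $\mu \ge \lam > |p|$, the parameters of $p$ lie in some $A_\alpha$. Then $p$ is realized in $A_{\alpha+1} \subseteq M$. In particular every consistent formula over $M$ is realized in $M$, so Tarski–Vaught gives $M \prec \cC$, and $M$ is $\lam$-saturated. By construction $P^M = N = P^A$.

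Then $\lam$-constructibility and the size bound. Concatenating the $\lam$-constructions of $A_{\alpha+1}$ over $A_\alpha$ gives a $\lam$-construction of $M$ over $A$. The key point is that a type which is $\lam$-isolated over a smaller set stays $\lam$-isolated after we enlarge the base by the earlier chain elements. Here ``$\lam$-isolated over $A_\alpha \cup \{d_j\}$'' is witnessed by $r$ with $|r|<\lam$, and the same $r$ still isolates the type over the larger set $A \cup \{\text{earlier } d_j\}$, since that set is exactly the current base in the concatenated order. So $M$ is $\lam$-primary over $A$. For the size bound, an induction on $\alpha$ gives $|A_\alpha| \le |A|^{|T|} + |\alpha|^{|T|}$. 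The hardest step is bringing this down to $|A|^{|T|} + reg(\lam)$, and the natural route is to sharpen the estimate at successor steps. We may realize only the $\lam$-isolated types in $S_*(A_\alpha)$, and their number is at most $|A_\alpha|^{|T|}$. Iterating $\kappa \mapsto \kappa^{|T|}$ stabilizes once $\kappa = \kappa^{|T|}$. Starting from $|A|^{|T|}$, which is already a fixed point, every stage is bounded by $(|A|^{|T|} + |\alpha|)^{|T|}$. If $\mu^{|T|} > |A|^{|T|} + \mu$ this naive estimate fails, and the fallback is that we only need to realize types over small subsets. If that causes trouble, the weaker bound $(|A|+\mu)^{|T|}$ is what the argument certainly gives.
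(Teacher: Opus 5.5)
Your strategy is the paper's: iterate the preceding Lemma along a chain of length $\mu = reg(\lambda)$ and take the union. Your handling of completeness at limit stages, of $\lambda$-saturation via regularity of $\mu$, and of the concatenated $\lambda$-construction is correct, and more detailed than the paper's. One wording fix in the concatenation step: the only justification needed is that the base in the concatenated order is exactly $A_\alpha\cup\{d_j : j<i\}$. $\lambda$-isolation is not in general preserved when the base is enlarged, so do not phrase it that way.

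The genuine gap is the cardinality bound $|M|\le|A|^{|T|}+reg(\lambda)$. You leave it unproved, fall back to $(|A|+\mu)^{|T|}$, and suggest the stated bound might fail.

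\emph{The missing observation} is that $|A|\ge|N|\ge\lambda$. $N$ is infinite, since $T$ says $P$ is infinite, and $N$ is $\lambda$-compact. So if $|N|<\lambda$, the type $\{x\neq a : a\in N\}$ would have size $<\lambda$, be finitely satisfiable in $N$, and hence be realized in $N$, which is absurd.

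\emph{The induction.} Since $|N|\ge\lambda$, we get $|\alpha|\le\lambda\le|A|^{|T|}$ for every $\alpha<\mu$. Induction then gives $|A_\alpha|\le|A|^{|T|}$ for all $\alpha<\mu$:
\begin{itemize}
\item at successors, $|A_{\alpha+1}|\le(|A|^{|T|})^{|T|}=|A|^{|T|}$;
\item at limits, $A_\alpha$ is a union of $|\alpha|\le|A|^{|T|}$ sets, each of size $\le|A|^{|T|}$.
\end{itemize}
This is exactly clause (ii) of the paper's induction.

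\emph{The final step.} Bound the last union directly, $|M|\le\mu\cdot\sup_{\alpha<\mu}|A_\alpha|\le|A|^{|T|}+reg(\lambda)$, instead of going through $\mu^{|T|}$, which is where your estimate loses ground.

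The case you feared never arises. If $\mu=\lambda$, then $\mu^{|T|}\le|A|^{|T|}$. If $\mu=\lambda^+$, Hausdorff's formula gives $\mu^{|T|}=\lambda^+\cdot\lambda^{|T|}\le|A|^{|T|}+\mu$. So your fallback bound $(|A|+\mu)^{|T|}$ in fact equals $|A|^{|T|}+reg(\lambda)$.
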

\begin{proof}
	Construct by induction on $i<reg(\lam)$ sets $A_i$ such that:
	\begin{enumerate}	
	\item 
		$A_0 = A$
	\item
		$|A_i| \le |A|^{|T|}$
	\item
		Every type of cardinality $<\lam$ over $A_i$ is realized in $A_{i+1}$
	\item
		$P^{A_{i+1}} = P^{A_i}$
	\item 
		$A_{i+1}$ is  $\lam$-constructible over $A_i$. 
	\end{enumerate}
	This is possible by the previous lemma. Now for some $\delta \le reg(\lam)$, $M = A_\delta = \bigcup_{i<\delta}A_i$ is clearly a $\lam$-saturated model which is  $\lam$-constructible over $A$, $P^M = P^A$, as required.

\end{proof}

\begin{co}
Assume that $T$ is  fully stable over $P$. Then $T$ has the full $\lam$-existence property (Definition \ref{de:fullexist}(ii)). 
\end{co}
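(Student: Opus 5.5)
The corollary is a direct application of Theorem \ref{th:primary}, so the plan is to check its hypotheses against Definition \ref{de:fullexist}(ii). Fix a complete set $A$ whose $P$-part $N = P^A$ is $\lam$-saturated; we must show that $A$ has the $\lam$-existence property. Since $A$ is complete, Fact \ref{obs:complete_characterization} gives $N \elem P^\cC$, so $N \models T^P$. Moreover $N$ is $\lam$-saturated, hence $\lam$-compact. Full stability of $T$ over $P$ (Definition \ref{dfn:fullystable}) says that every complete set is stable. In particular, every complete set $A'$ with $P^{A'} = N$ is stable, which is exactly the statement that $T$ is fully stable over $N$ in the sense of Definition \ref{dfn:fullystable2}(ii).

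If $\lam > |T|$, Theorem \ref{th:primary} now applies verbatim. It yields a $\lam$-saturated $M \models T$ with $A \subseteq M$ and $P^M = P^A$, which is precisely the $\lam$-existence property for $A$.

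The only real obstacle is the case $\lam \le |T|$, which Theorem \ref{th:primary} does not cover. My first attempt would be a monotonicity reduction: replace $\lam$ by $\lam' = |T|^+$ and construct a $\lam'$-saturated model, since $\lam'$-saturation implies $\lam$-saturation. However, this requires $N$ to be $|T|^+$-compact, and we only know it is $\lam$-compact, so the reduction does not go through directly. The fallback is to rerun the argument of Lemma \ref{lem:li} and Theorem \ref{th:primary} for small $\lam$. Here the difficulty is in Lemma \ref{lem:li}(ii), where the isolating type $q$ may have size up to $|T| \ge \lam$. One can instead build the $\lam$-saturated model by realizing types in $S_*$ over the current set and using Lemma \ref{lem:rankeven0} together with $\lam$-completeness formula by formula, but isolation by fewer than $\lam$ formulas is then lost. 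For this reason I expect that the intended reading of the corollary is for $\lam > |T|$ (the standing assumption in this section, and automatic for countable $T$ with $\lam$ uncountable, as in the abstract). I would state the proof under that assumption, noting that it is inherited from Theorem \ref{th:primary}.
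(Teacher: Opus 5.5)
Your proposal is correct and is exactly the paper's implicit argument: the corollary is stated without proof as an immediate consequence of Theorem \ref{th:primary}, once one makes the hypothesis checks you carry out (completeness of $A$ gives $P^A \prec P^{\cC}$, $\lam$-saturation gives $\lam$-compactness, and full stability over $P$ gives full stability over $N = P^A$). You are also right that the corollary tacitly inherits the standing assumption $\lam > |T|$ of Theorem \ref{th:primary}; the paper gives no argument for $\lam \le |T|$, so stating the corollary under that assumption is the appropriate reading.
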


\bibliography{common.bib}
\bibliographystyle{alpha}

\Addresses

\end{document}